\newcommand\at{{\tilde a}}
\newcommand\bt{{\tilde b}}
\newcommand\Ex{{\mathbf E}}
\newcommand\chitilde{\tilde{\chi}}
\newcommand\Normal{{\mathcal N}}
\newcommand\cF{{\mathcal F}}
\newcommand\cM{{\mathcal M}}
\newcommand\cP{{\mathcal P}}
\newcommand\cQ{{\mathcal Q}}
\newcommand\N{{\mathbb N}}
\newcommand\R{{\mathbb R}}
\newcommand\dto{\overset{d}{\to }}
\newcommand\one{{\bf 1}}
\newcommand\bra[1]{\langle #1 \rangle}
\DeclareMathOperator{\Gam}{Gamma}
\DeclareMathOperator{\Tr}{Tr}
\DeclareMathOperator{\Var}{Var}
\DeclareMathOperator{\Cov}{Cov}
\newtheorem{theorem}{Theorem}[section]
\newtheorem{corollary}[theorem]{Corollary}
\newtheorem{lemma}[theorem]{Lemma}
\theoremstyle{definition}
\theoremstyle{remark}
\long\def\symbolfootnote[#1]#2{\begingroup%
\def\thefootnote{\fnsymbol{footnote}}\footnote[#1]{#2}\endgroup}
\title{Global spectrum fluctuations for Gaussian beta ensembles: a martingale approach}
\author{Khanh Duy Trinh
 \footnote{Institute of Mathematics for Industry, Kyushu University, Japan. Email: trinh@imi.kyushu-u.ac.jp} 
}
\begin{document}
\maketitle
\begin{abstract}
%Abstract
The paper describes the global limiting behavior of Gaussian beta ensembles where the parameter $\beta$ is allowed to vary with the matrix size $n$. In particular, we show that as $n \to \infty$ with $n\beta \to \infty$, the empirical distribution converges weakly to the semicircle distribution, almost surely. The Gaussian fluctuation around the limit is then derived by a martingale approach.   

\medskip

	\noindent{\bf Keywords:}{ Gaussian beta ensembles; tridiagonal random matrices; semicircle law; martingale difference central limit theorem } 
		
\medskip
	
	\noindent{\bf AMS Subject Classification: } Primary 60B20; Secondary 60F05
\end{abstract}

\section{Introduction}
Gaussian beta ensembles, as a generalization of Gaussian Othogonal/Unitary/Symplectic Ensembles (GOE, GUE and GSE for short), were originally defined as the ensembles of real particles with the following joint density function
\begin{equation}\label{GE-denstiy}
	p_{n, \beta}(\lambda_1, \dots, \lambda_n) = \frac{1}{Z_{n, \beta}}  |\Delta(\lambda)|^\beta  e^{-\frac{n \beta} 4(\lambda_1^2 + \cdots + \lambda_n^2)}, \quad (\beta > 0).
\end{equation}
Here $\Delta(\lambda) = \prod_{i<j}(\lambda_j - \lambda_i)$ denotes the Vandermonde determinant and $Z_{n, \beta}$ is a normalizing constant. Gaussian beta ensembles belong to a class of general beta ensembles which is related to many fields of mathematics and physics such as random matrix theory, spectral theory, representation theory and quantum mechanics. Based on the explicit joint density, the semicircle law and a Gaussian fluctuation around the limit were established by a method which is applicable for a large class of beta ensembles \cite{Johansson98}. Here the semicircle law means that the empirical distribution $L_n = n^{-1}(\delta_{\lambda_1} + \cdots + \delta_{\lambda_n})$, the probability measure which puts equal mass on each eigenvalue, converges weakly to the semicircle distribution, almost surely. More precisely, it means that for any bounded continuous function $f$, as $n \to \infty$,
\begin{equation}\label{LLN}
	\bra{L_n, f} =\frac{1}{n}\Big(f(\lambda_1) + \cdots + f(\lambda_n) \Big) \to \int_{-2}^2 f(x) \frac{\sqrt{4- x^2}}{2\pi} dx =\bra{sc, f} \text{ almost surely,}
\end{equation}
with $sc$ denoting the semicircle distribution, the probability measure on $[-2,2]$ with density $(2\pi)^{-1} \sqrt{4 - x^2}$. Moreover, when the test function $f$ is smooth enough, the following central limit theorem holds
\begin{equation}\label{CLT}
	f(\lambda_1) + \cdots + f(\lambda_n)  - m_{n,f} \dto \Normal(0, \frac{\sigma_f^2}{\beta}),
\end{equation}
where $m_{n, f}$ is non random and $\sigma_f^2$ is a constant which does not depend on $\beta$ and can be expressed as a quadratic functional of $f$, the notation``$\dto$'' denotes the convergence in distribution and $\Normal(\mu, \sigma^2)$ denotes a Gaussian distribution with mean $\mu$ and variance $\sigma^2$.

Gaussian beta ensembles are now realized as eigenvalues of symmetric tridiagonal matrices, called Jacobi matrices, with independent entries distributed according to a certain distribution. The model, which is based on  tridiagonalizing the GOE or GUE, was introduced in  \cite{DE02}. Let $T_{n,\beta}$ be a symmetric tridiagonal matrix with independent (up to the symmetric constraint) entries distributed as
\[
	T_{n, \beta} = \frac{\sqrt{2}}{\sqrt{n\beta}} \begin{pmatrix}
		\Normal(0,1)		&\chitilde_{(n - 1)\beta}	\\
		\chitilde_{(n - 1)\beta}	&\Normal(0, 1)		&\chitilde_{(n - 2)\beta}		\\
					
												&\ddots		&\ddots		&\ddots \\
						
						&&					\chitilde_{\beta}	&\Normal(0, 1)
	\end{pmatrix},
\] 
where $\chitilde_k$ denotes the distribution of the  square root of the gamma distribution with parameters $(k/2,1)$. Then the eigenvalues of $T_{n,\beta}$ are distributed according to Gaussian beta ensembles.

Spectrum properties of Gaussian beta ensembles have been intensively studied. Let us mention here some results for fixed $\beta$ on three main regimes (global, local and edge regimes). Typical results in the global regime are the law of large numbers~\eqref{LLN} and the central limit theorem~\eqref{CLT}. Such results can be proved in several ways \cite{DE06, Duy2016, Johansson98}. The local regime refers to the study of limiting behavior of eigenvalues near a fixed point $E\in (-2,2)$. Based on the tridiagonal matrix, the local statistics or bulk statistics is shown to converge to the Sine-$\beta$ point process \cite{Valko-Virag-2009}. In the edge regime, the largest eigenvalue, under suitable scaling, converges to the Tracy-Widom-$\beta$ distribution \cite{Ramirez-Rider-Virag-2011}.

The aim of this paper is to study the global regime of Gaussian beta ensembles when $\beta$ varies with $n$. It turns out that the global limiting behavior is governed by  the coupling parameter $n\beta$. Indeed, on the one hand, the semicircle law holds as long as $n\to \infty$ with $n\beta \to \infty$, which is the same as the case $\beta$ being fixed. This result can be viewed as a direct consequence of the large deviation principle for the empirical distribution \cite[Theorem~2.4]{Peche15}. On the other hand, when $n\beta$ stays bounded, the limiting behavior depends on the limit of the sequence $\{n\beta\}$. Specifically, as $n\to \infty$ with $n \beta \to 2\alpha \in (0, \infty)$, the empirical distribution converges to the limiting measure depending on $\alpha$, called the probability measure of associated Hermite polynomials \cite{Allez12, Peche15, Duy-Nakano-2016, DS15}.

 A natural question, of course, is about the fluctuation around the limit. Our main result gives the answer.
\begin{theorem}\label{thm:main}
\begin{itemize}
	\item[\rm(i)]
	As $n \to \infty$ with $n\beta \to \infty$, the empirical distribution $L_n$ converges weakly to the semicircle distribution, almost surely. 
	
	\item[\rm(ii)] Let $f$ be a differentiable function with continuous derivative of polynomial growth. Then as $n \to \infty$ with $n\beta \to \infty$, 
	\[
		 n\sqrt{\beta}(\bra{L_n, f} - \Ex[\bra{L_n, f}]) = \sqrt{\beta}\Big( \sum_{j = 1}^n f(\lambda_j) - \Ex \Big[ \sum_{j = 1}^n (f(\lambda_j)\Big]  \Big)\to \Normal(0, \sigma_f^2),
	\]
	where
\begin{equation}\label{variance-f}
		\sigma_f^2 = \frac{1}{2\pi^2} \int_{-2}^2 \int_{-2}^2 \left( \frac{f(x) -f(y)}{x - y}\right)^2 \frac{4 - xy}{\sqrt{4 - x^2}\sqrt{4 - y^2}} dx dy. 
	\end{equation}
\end{itemize}
\end{theorem}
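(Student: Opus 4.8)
The plan is to prove both parts by the moment method combined with a martingale central limit theorem, exploiting that $\bra{L_n, f} = n^{-1}\Tr f(T_{n,\beta})$ and that the entries of $T_{n,\beta}$ are independent. I would first reduce everything to polynomial test functions $f(x)=x^p$, for which $\Tr T_{n,\beta}^p$ expands as a sum over closed nearest-neighbour walks on $\{1,\dots,n\}$ (the tridiagonal structure forbids longer jumps), each walk contributing a monomial in the independent diagonal entries $a_i\sim\Normal(0,1)$ and off-diagonal entries $b_i\sim\chitilde_{(n-i)\beta}$. For part (i) I would show $\Ex\bra{L_n, x^p}\to\bra{sc, x^p}$ (the Catalan numbers for even $p$, zero for odd $p$) by tracking which walks survive in the limit, and bound $\Var\bra{L_n, x^p}$; almost sure convergence for all polynomials then follows from Borel--Cantelli (using the concentration afforded by the variance bound, equivalently the large-deviation estimate cited in the text), and the extension to bounded continuous $f$ is standard via Weierstrass approximation together with a uniform bound on the moments.

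For part (ii), fix a polynomial $f$ and set $M_n = \Tr f(T_{n,\beta}) - \Ex[\Tr f(T_{n,\beta})]$; the assertion is $\sqrt\beta\,M_n \dto \Normal(0,\sigma_f^2)$. I would list the $2n-1$ independent entries as $X_1,\dots,X_N$, put $\F_k = \sigma(X_1,\dots,X_k)$ and $\Ex_k[\cdot]=\Ex[\cdot\mid\F_k]$, and write the telescoping martingale decomposition
\begin{equation*}
	\sqrt\beta\,M_n = \sum_{k=1}^N \sqrt\beta\,D_k, \qquad D_k = (\Ex_k - \Ex_{k-1})\Tr f(T_{n,\beta}).
\end{equation*}
Then I would invoke the martingale-difference CLT: it suffices to verify (a) the conditional Lindeberg condition, for which I would prove the stronger fourth-moment bound $\sum_k \Ex[(\sqrt\beta\,D_k)^4]\to 0$, and (b) the convergence of the conditional variance $\beta\sum_{k=1}^N \Ex[D_k^2\mid\F_{k-1}]\Pto \sigma_f^2$.

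To estimate the differences I would use that perturbing a single entry affects $\Tr f(T_{n,\beta})$ only through the banded matrix $f'(T_{n,\beta})$ (abbreviating $T=T_{n,\beta}$): the sensitivity to the diagonal variable $a_i$ is $\tfrac{\sqrt2}{\sqrt{n\beta}}\,f'(T)_{ii}$ and to the off-diagonal variable $b_i$ is $\tfrac{2\sqrt2}{\sqrt{n\beta}}\,f'(T)_{i,i+1}$. Since each entry has variance $O(1/(n\beta))$, every $D_k$ is of order $(n\beta)^{-1/2}$ times a bounded-moment factor, giving $\sum_k \Ex[(\sqrt\beta\,D_k)^4]=O((n\beta)^{-1})\to 0$ and hence (a). For (b) I would compute $\Ex[D_k^2\mid\F_{k-1}]$ to leading order using Gaussian integration by parts for the diagonal entries and the analogous Gamma identity for the chi-distributed off-diagonal entries (whose limiting variance is $\tfrac14$); summing over $k$ reduces the conditional variance to empirical spectral averages of the form $n^{-1}\sum_i (f'(T)_{ii})^2$ and $n^{-1}\sum_i (f'(T)_{i,i+1})^2$, which by part (i) and the concentration of the entries converge to integrals against the semicircle law. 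A trigonometric substitution $x=2\cos\theta$, diagonalising these limiting averages in the Chebyshev basis, finally produces the closed form $\sigma_f^2$ in \eqref{variance-f}. The passage to general $f$ with continuous derivative of polynomial growth I would carry out by approximating $f'$ uniformly on compacts and controlling the tails through the moment and variance bounds already established.

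The main obstacle is step (b): not merely the order of the conditional variance but its exact limit, including the precise constant and the cross term between the diagonal and off-diagonal contributions responsible for the factor $4-xy$ in the kernel. This requires an asymptotically exact (not just order-of-magnitude) formula for the sensitivity of the trace to each entry --- handled by integration by parts for the Gaussian diagonal, but genuinely needing a separate treatment of the off-diagonal chi variables with their $i$-dependent shape parameter $(n-i)\beta$ --- together with uniform estimates allowing the empirical spectral averages to be replaced by their deterministic semicircle limits inside the sum. I would manage this bookkeeping first for monomials, where $f'(T_{n,\beta})$ is a finite polynomial in $T_{n,\beta}$ and every quantity is an explicit finite walk sum, and only afterwards pass to the limit and to general test functions.
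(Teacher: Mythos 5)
Your overall architecture --- reduce to polynomials, expand the trace over nearest-neighbour walks, run a martingale-difference CLT on the filtration generated by the entries, and get Lindeberg from fourth moments --- is exactly the paper's skeleton, and that part is sound. The genuine gap is in your step (b), the identification of the limiting conditional variance. You claim the conditional variance reduces to $n^{-1}\sum_i (f'(T)_{ii})^2$ and $n^{-1}\sum_i (f'(T)_{i,i+1})^2$, which ``by part (i) and the concentration of the entries converge to integrals against the semicircle law.'' Neither justification works. First, these sums are not linear spectral statistics (they are sums of squares of individual entries of $f'(T)$, not the trace of a polynomial in $T$), so part (i) says nothing about them. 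Second, and fatally, the entries of $T_{n,\beta}$ concentrate around \emph{position-dependent} values: $T_{n,\beta}(i,i+1)\approx\sqrt{(n-i)/n}$, so only an $o(n)$ top corner of the matrix resembles $J_{free}$; around site $i\approx tn$ the matrix looks like $\sqrt{1-t}\,J_{free}$. The correct limit of your sums is an integral over the position parameter $t\in[0,1]$ of squared Chebyshev--Fourier coefficients of $f'$ on the shrinking intervals $[-2\sqrt{1-t},\,2\sqrt{1-t}]$, not a semicircle integral, and ignoring this profile produces a wrong constant. Concrete check with $f(x)=x^2$: up to a negligible diagonal term, $\sqrt\beta\,\Tr T_{n,\beta}^2=\sqrt\beta\,\tfrac{4}{n\beta}\sum_i\tilde b_i^2$ with $\tilde b_i^2\sim\Gam((n-i)\beta/2,1)$, whose variance is $\beta\,\tfrac{16}{(n\beta)^2}\sum_i\tfrac{(n-i)\beta}{2}\to 4$, in agreement with \eqref{variance-f}; your prescription, treating $f'(T)_{i,i+1}=2T_{i,i+1}\approx 2$ for all $i$, gives $8$. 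So as written, step (b) proves a CLT with the wrong variance. It is fixable by carrying the $\sqrt{1-t}$ profile through the whole computation (essentially the Dumitriu--Edelman/Popescu route), but that is a substantially heavier calculation than you indicate.

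It is worth seeing how the paper sidesteps exactly this difficulty: it never computes the limit of the conditional variance directly. It shows that $\beta\sum_k\Ex[Y_k^2\mid\F_{k-1}]$ concentrates around its mean --- the summands are independent at distance greater than $\degree p$ and have fourth moments $O((n\beta)^{-2})$, so the variance of the sum is $o(1)$ --- and that mean is automatically $\beta\Var[S_n]$; the existence and value of $\lim n^2\beta\Var[\bra{L_n,p}]$ is then obtained by a separate exact computation (Lemmas~\ref{lem:relation-spectral-measure}--\ref{lem:single-moment} and Corollary~\ref{cor:variance}), exploiting a structure special to Gaussian beta ensembles, namely that the spectral weights $q_j^2$ are Dirichlet$(\beta/2,\dots,\beta/2)$ and independent of the eigenvalues; finally the closed form \eqref{variance-f} is imported from the fixed-$\beta$ literature, since the limit does not depend on $\beta$. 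Two smaller points. In part (i), a variance bound of order $1/n$ (or even $1/(n^2\beta)$) is not summable in general, so Borel--Cantelli needs either the fourth-moment bound $O(1/n^2)$ (the paper's choice, Lemma~\ref{lem:as}) or the cited LDP; your hedge toward the LDP is fine, but ``concentration afforded by the variance bound'' alone is not. And in your final extension to differentiable $f$ of polynomial growth, approximating $f'$ on compacts is not enough: you need an a priori variance bound valid for \emph{non-polynomial} test functions to control the approximation error, which the paper gets from the Poincar\'e inequality \eqref{GE} derived from the joint density --- nothing in your polynomial estimates substitutes for it.
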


For fixed $\beta$, the central limit theorem can be derived by analyzing the joint density \cite{Johansson98} or by combinatorics arguments based on the tridiagonal matrix model \cite{DE06}. These methods may also work for the case where $\beta$ varies. However, instead of refining them, we will introduce an approach based on martingale theory. The approach is natural and straightforward because the entries of tridiagonal matrices are independent. Note that the martingale theory has been used to establish such type of central limit theorem for several models in random matrix theory (see \cite{Shcherbina-2015} and the references therein).

Let us now explain the key idea of this paper. Let $p$ be a polynomial of degree $m > 0$. Then the linear statistics with respect to $p$ can be expressed as 
\[
	S_n =n\bra{L_n, p}= \sum_{j = 1}^n p(\lambda_j) = \Tr (p(T_{n, \beta})) = \sum_{j = 1}^n p(T_{n, \beta})(j,j).
\]
Since $T_{n, \beta}$ is a tridiagonal matrix, $p(T_{n, \beta})(j,j)$ depends only on entries near the $(j,j)$ location. In particular, $p(T_{n, \beta})(i, i)$ and $p(T_{n, \beta})(j,j)$ are independent if $|i - j| \ge m$. Thus, it should be easy to calculate the conditional expectation and  a central limit theorem for such linear statistics could be derived from the martingale difference central limit theorem. To extend the central limit theorem to a certain class of differentiable functions, we use the Poincar\'e inequality which is derived from the joint density to bound the variance. Then the result follows by a standard argument.

For the sake of completeness, we mention here the global behavior of Gaussian beta ensembles in the regime that $n\to \infty$ with $n \beta \to 2\alpha \in (0, \infty)$. 
\begin{theorem}[cf.~{\cite{Duy-Nakano-2016}}]\label{thm:alpha-regime}
\begin{itemize}
	\item[\rm(i)]
	As $n \to \infty$ with $n\beta \to 2\alpha \in (0, \infty)$, the empirical distribution $L_n$ converges weakly to the measure $\nu_\alpha$, almost surely. Here the density of $\nu_\alpha$ is given by $\nu_\alpha(x) = \sqrt{\alpha}\bar \mu_\alpha(\sqrt{\alpha} x)$ with 
	\[
		\bar\mu_\alpha(x) = \frac{e^{-x^2/2}}{\sqrt{2\pi}}\frac{1}{|\hat f_\alpha (x)|^2}, \text{where } \hat f_\alpha (x) = \sqrt{\frac{\alpha}{\Gamma(\alpha)}} \int_0^\infty t^{\alpha - 1} e^{-\frac{t^2}{2} + i x t} dt.
	\]
	\item[\rm(ii)] Let $f$ be a differentiable function with continuous derivative of polynomial growth. Then as $n \to \infty$ with $n\beta \to 2\alpha$, 
	\[
		 \sqrt{\beta} \Big( \sum_{j = 1}^n f(\lambda_j) - \Ex \Big[ \sum_{j = 1}^n (f(\lambda_j)\Big]  \Big)\to \Normal(0, \hat \sigma_{f,\alpha}^2),
	\]
	where $\hat \sigma_{f,\alpha}^2$ is a constant.
\end{itemize}
\end{theorem}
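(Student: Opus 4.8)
The plan is to handle part~(i) by the method of moments and part~(ii) by the martingale scheme of Theorem~\ref{thm:main}, the one new feature being that in this regime the off-diagonal entries no longer concentrate but converge to genuinely random limits whose law varies along the diagonal. For (i), I would write the $p$-th moment as
\[
	\bra{L_n, x^p} = \frac1n \Tr(T_{n,\beta}^p) = \frac1n \sum_{i=1}^n (T_{n,\beta}^p)(i,i)
\]
and expand each diagonal entry over closed nearest-neighbour paths of length $p$ based at $i$. By tridiagonality, $(T_{n,\beta}^p)(i,i)$ is a fixed-degree polynomial in the entries in a window of radius $p$ about $i$, so its expectation factorizes over independent Gaussian and Gamma moments. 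The crux of the regime is that, with $k=\lfloor sn\rfloor$, the squared off-diagonal entry $b_k^2=\tfrac{2}{n\beta}$ times a $\Gam((n-k)\beta/2,1)$ variable does not concentrate: its shape parameter tends to $(1-s)\alpha$ and its scale to $\alpha^{-1}$, while the diagonal entry tends to $\alpha^{-1/2}\Normal(0,1)$. Thus the window around site $\lfloor sn\rfloor$ converges in law to that of a translation-invariant random Jacobi operator $J_c$ with $c=(1-s)\alpha$, every entry moment converges, and a Riemann-sum argument gives
\[
	\lim_n \Ex\bra{L_n, x^p} = \int_0^1 \Ex\big[(J_{(1-s)\alpha}^p)(0,0)\big]\,ds = \frac1\alpha \int_0^\alpha \int x^p\, d\rho_c(x)\,dc,
\]
where $\rho_c$ is the density of states measure of $J_c$; so the limit is the average $\tfrac1\alpha\int_0^\alpha \rho_c\,dc$ over the depth parameter.

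To upgrade to almost-sure weak convergence I would exploit finite-range dependence: $(T_{n,\beta}^p)(i,i)$ and $(T_{n,\beta}^p)(j,j)$ are independent once $|i-j|>2p$, which forces $\Var\bra{L_n,x^p}=O(1/n)$ and, computing the fourth centred moment the same way, $\Ex|\bra{L_n,x^p}-\Ex\bra{L_n,x^p}|^4=O(1/n^2)$; Markov and Borel--Cantelli then give almost-sure convergence of every moment. Since $\nu_\alpha$ has sub-Gaussian tails it satisfies Carleman's condition, hence is moment-determinate, and moment convergence yields the claimed almost-sure weak convergence. \textbf{The main obstacle} is the explicit identification of $\tfrac1\alpha\int_0^\alpha\rho_c\,dc$ with the closed form $\nu_\alpha(x)=\sqrt\alpha\,\bar\mu_\alpha(\sqrt\alpha x)$, i.e.\ producing the transform $\hat f_\alpha$. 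This is where the three-term recurrence enters: the characteristic polynomials of the principal minors of $T_{n,\beta}$ obey a recursion whose coefficients interpolate those of the \emph{associated Hermite polynomials} with the continuously varying parameter $(n-k)\beta/2$, and in the limit $\bar\mu_\alpha$ appears as their orthogonality measure, whose Stieltjes transform is read off from the continued-fraction (confluent-hypergeometric) representation behind $\hat f_\alpha$. This analytic computation is carried out in \cite{Duy-Nakano-2016}, and I would invoke it for the explicit density.

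For part~(ii) I would run the martingale scheme. For a polynomial $p$ set $S_n=\Tr(p(T_{n,\beta}))$ and let $\F_k$ be generated by $a_1,\dots,a_k,b_1,\dots,b_{k-1}$; then
\[
	\sqrt\beta\,(S_n-\Ex S_n) = \sum_{k=1}^n \sqrt\beta\, D_k, \qquad D_k = \Ex[S_n\mid \F_k]-\Ex[S_n\mid \F_{k-1}],
\]
is a martingale difference array. Revealing site $k$ only changes those $(p(T_{n,\beta}))(i,i)$ with $|i-k|\le \degree p$, so $D_k$ depends on a bounded window and is $O(1)$ in $L^2$ uniformly in $k$; its conditional second moment has a bulk limit $v((1-k/n)\alpha)$ governed by the same local laws as in (i). Hence $\beta\sum_k \Ex[D_k^2\mid\F_{k-1}] \to \tfrac1\alpha\int_0^\alpha v(c)\,dc =: \hat\sigma_{p,\alpha}^2$ in probability, and since $\sqrt\beta\,|D_k|\to 0$ the conditional Lindeberg condition is immediate from the uniform moment bounds. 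The martingale difference central limit theorem then gives $\sqrt\beta\,(S_n-\Ex S_n)\dto \Normal(0,\hat\sigma_{p,\alpha}^2)$.

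Finally, to pass from polynomials to a differentiable $f$ with derivative of polynomial growth, I would use the Poincar\'e inequality derived from the joint density~\eqref{GE-denstiy} to bound $\beta\,\Var(\sum_j (f-p)(\lambda_j))$ uniformly in $n$ by a quantity controlled by $f'-p'$, and then conclude by the standard polynomial-approximation argument, exactly as in the proof of Theorem~\ref{thm:main}(ii). The resulting limit $\hat\sigma_{f,\alpha}^2$ is the advertised constant; obtaining a closed-form expression for it would again require the explicit local laws, but the statement only asserts its existence, so this is not needed.
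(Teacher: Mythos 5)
Your proposal is correct, and its skeleton (moments plus Borel--Cantelli for (i), the martingale difference CLT plus Poincar\'e for (ii)) matches the paper's, but two key computations are carried out by a genuinely different route, so a comparison is worthwhile. For the limiting expectation in (i), the paper never works with depth-dependent local models: it uses the Dumitriu--Edelman fact that the spectral weights $q_j^2$ are Dirichlet$(\beta/2,\dots,\beta/2)$-distributed and independent of the eigenvalues, giving the exact identity $\Ex[\bra{L_n,p}]=\Ex[\bra{\mu_n,p}]$ (Lemma~\ref{lem:ex}); since the corner spectral measure $\mu_n$ depends only on finitely many top-left entries, whose parameters all tend to $2\alpha$, the limit is the mean spectral measure of the single i.i.d.\ operator $J_\alpha$, identified with $\nu_\alpha$ in \cite{DS15}. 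Your site-by-site trace expansion with slowly varying shape parameter $(n-k)\beta/2\to(1-s)\alpha$ yields instead the depth-averaged density of states $\frac1\alpha\int_0^\alpha\rho_c\,dc$; this is a valid alternative representation of the same limit (and your delegation of the closed form to \cite{Duy-Nakano-2016} parallels the paper's citation of \cite{DS15}), and it buys generality --- it applies to arbitrary tridiagonal models with independent entries, cf.\ \cite{Popescu-09} --- at the cost of a uniformity/Riemann-sum argument that the Dirichlet identity sidesteps. Your almost-sure upgrade (finite-range independence of the $p(T_{n,\beta})(i,i)$, variance $O(1/n)$, fourth moment $O(1/n^2)$, Borel--Cantelli, moment determinacy of $\nu_\alpha$) is exactly Lemma~\ref{lem:as} and the proof of Theorem~\ref{thm:LLN}. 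In (ii), the paper verifies the variance condition of Theorem~\ref{thm:general-CLT} not by local laws but by the exact algebraic expansion $\Var[\bra{L_n,p}]=\sum_{k=2}^{m+1}\beta\,\ell_{p;k}(\beta)/(n\beta)^k$ (Lemmas~\ref{lem:relation-spectral-measure}, \ref{lem:single-moment} and the lemma preceding Corollary~\ref{cor:variance}), which handles the regimes $n\beta\to\infty$ and $n\beta\to2\alpha$ simultaneously and gives the explicit $\sigma_{p,\alpha}^2=\sum_{k\ge2}\ell_{p;k}(0)/(2\alpha)^{k-2}$; your Riemann-sum identification of $\lim\beta\sum_k\Ex[D_k^2]$ suffices for existence, which is all the statement asserts. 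One step you state too quickly: convergence of $\beta\sum_k\Ex[D_k^2\mid\cF_{k-1}]$ \emph{in probability} does not follow from the bulk limit of its mean alone; you also need $\Var\bigl[\beta\sum_k\Ex[D_k^2\mid\cF_{k-1}]\bigr]\to0$, which the paper supplies in Lemma~\ref{lem:general-CLT} from the $m$-range independence of the conditional variances together with the fourth-moment bound $\Ex[\Delta_k(\xi_w)^4]\le C/(n\beta)^2$ --- both tools already appear in your write-up, so this is an omission of detail rather than a gap. The final passage from polynomials to differentiable $f$ with polynomially growing derivative via the Poincar\'e inequality \eqref{GE} is the same in both treatments (the paper, like you, refers to \cite{Duy-Nakano-2016} for that approximation step).
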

It was shown in \cite{Peche15, Duy-Nakano-2016} that the convergence of the empirical measure to a limit holds in probability. We strengthen it to the almost sure convergence in Lemma~\ref{lem:as}. Note that the martingale approach has already been used to derive the above central limit theorem. However, the proof in \cite{Duy-Nakano-2016} relied on some estimates  which hold only in the case $n\beta$ being bounded. In this paper, we generalize such martingale approach and make it easy to apply to general tridiagonal matrices with independent entries. For example, a central limit for general tridiagonal matrix models in \cite{Popescu-09} can also be proved by the approach here.

The paper is organized as follows. The proof of the semicircle law in case $n\beta \to \infty$ is given in Section 2. Section 3 deals with general tridiagonal matrices whose entries are independent. We give sufficient conditions under which a central limit theorem holds for the linear statistics of a polynomial test function. As an application of the general theory, Theorem~\ref{thm:main}(ii) is derived in Section 4. We end up with some remarks in Section 5.

\section{The semicircle law}
In this section, we show the almost sure convergence of the empirical distribution of Gaussian beta ensembles to a limit as $n \to \infty$ with $n\beta \to 2\alpha \in (0, \infty]$.

Recall that 
\[
	T_{n, \beta} = \frac{\sqrt{2}}{\sqrt{n\beta}} \begin{pmatrix}
		\Normal(0,1)		&\chitilde_{(n - 1)\beta}	\\
		\chitilde_{(n - 1)\beta}	&\Normal(0, 1)		&\chitilde_{(n - 2)\beta}		\\
					
												&\ddots		&\ddots		&\ddots \\
						
						&&					\chitilde_{\beta}	&\Normal(0, 1)
	\end{pmatrix}.
\] 
Let $\{\at_i\}_{i = 1}^n$ be a sequence of i.i.d.~(independent identically distributed) random variables with standard Gaussian distribution and let $\{\bt_i\}_{i = 1}^{n - 1}$ be a sequence of independent random variables which is also independent of $\{\at_i\}$ such that $\bt_i^2$ has the gamma distribution with parameters $({(n-i)\beta}/{2},1)$. 
Then the diagonal $\{a_i\}_{i = 1}^n$ and the sub-diagonal $\{b_i\}_{i = 1}^{n - 1}$ of $T_{n, \beta}$ are expressed as 
\[
	a_i =  \frac{\sqrt{2}}{\sqrt{n\beta}} \at_i, \quad b_i =  \frac{\sqrt{2}}{\sqrt{n\beta}} \bt_{i}.
\]

The following properties of the gamma distribution are useful for the next discussion.
\begin{lemma}
\begin{itemize}
	\item[\rm(i)] Let $X_n$ be a sequence of gamma distributed random variables with parameters $(\alpha_n,1)$. Assume that $\alpha_n \to \infty$ as $n \to \infty$. Then 
\[
	\frac{\sqrt{X_n}}{\sqrt{\alpha_n}} \to 1 \text{ in probability and in $L^q$ for any $q \in [1,\infty)$.}
\]
\item[\rm(ii)] 	Let $X$ be distributed as gamma distribution with parameters $(\alpha,1)$. Then for any nonnegative interger $k$,
\[
	\Ex[X^k] =\frac{\Gamma(\alpha + k)}{\Gamma(\alpha)} = \alpha (\alpha + 1) \cdots (\alpha + k -1 ).
\]
\end{itemize}

\end{lemma}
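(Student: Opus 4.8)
The plan is to treat part (ii) first, since it is a direct computation, and then use it to drive part (i). For (ii), I would write the expectation as an integral against the Gamma density $\frac{1}{\Gamma(\alpha)} x^{\alpha-1} e^{-x}$ on $(0,\infty)$; inserting the factor $x^k$, the integral becomes $\frac{1}{\Gamma(\alpha)}\int_0^\infty x^{\alpha+k-1} e^{-x}\,dx = \Gamma(\alpha+k)/\Gamma(\alpha)$ by the very definition of the Gamma function. The product expression then follows from iterating the functional equation $\Gamma(z+1) = z\Gamma(z)$. The same computation in fact gives the moment formula $\Ex[X^s] = \Gamma(\alpha+s)/\Gamma(\alpha)$ for every real $s > -\alpha$, and it is this real-exponent extension that I would record at the outset for use in (i).

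For (i), set $Y_n := \sqrt{X_n}/\sqrt{\alpha_n}$. Applying the extended moment formula with $s = p/2$ gives $\Ex[Y_n^p] = \alpha_n^{-p/2}\,\Gamma(\alpha_n + p/2)/\Gamma(\alpha_n)$ for every $p > 0$. The key analytic input is the classical asymptotic $\Gamma(\alpha+s)/\Gamma(\alpha) \sim \alpha^s$ as $\alpha \to \infty$ (a consequence of Stirling's formula), which yields $\Ex[Y_n^p] \to 1$ for every fixed $p > 0$. In particular $\Ex[Y_n] \to 1$ and $\Ex[Y_n^2] \to 1$, so $\Var(Y_n) \to 0$ and $Y_n \to 1$ in $L^2$, hence in probability (alternatively one may apply Chebyshev's inequality to $X_n/\alpha_n$ directly and then the continuous mapping theorem).

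To upgrade to $L^q$ convergence for all $q \in [1,\infty)$, I would first handle even integer exponents $q = 2m$: expanding $(Y_n-1)^{2m} = \sum_{j=0}^{2m}\binom{2m}{j}(-1)^j Y_n^{2m-j}$ and taking expectations, each moment $\Ex[Y_n^{2m-j}]$ tends to $1$, so the limit is $\sum_{j=0}^{2m}\binom{2m}{j}(-1)^j = (1-1)^{2m} = 0$; thus $\Ex[(Y_n-1)^{2m}] \to 0$. For a general $q \in [1,\infty)$, choose an integer $m$ with $2m \ge q$ and invoke Lyapunov's monotonicity inequality on the probability space, $\|Y_n - 1\|_q \le \|Y_n - 1\|_{2m} \to 0$, to conclude.

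The computations are all routine; the only point requiring any care is the Gamma-ratio asymptotic, and more specifically the appearance of the non-integer power $p/2$ in the moments of $Y_n$, which is precisely why it is convenient to establish the real-exponent moment formula at the start. Everything else reduces to standard tools, namely \emph{Chebyshev} for the convergence in probability and \emph{Lyapunov} for the passage between $L^q$ norms.
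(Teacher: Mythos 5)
Your proof is correct. The paper states this lemma without any proof, treating both parts as standard facts about the gamma distribution, so there is no argument in the paper to compare against; your write-up simply supplies the canonical details. The route you take --- the real-exponent moment formula $\Ex[X^s]=\Gamma(\alpha+s)/\Gamma(\alpha)$, the ratio asymptotic $\Gamma(\alpha+s)/\Gamma(\alpha)\sim\alpha^s$, the binomial expansion of $\Ex[(Y_n-1)^{2m}]$ for even integer exponents, and Lyapunov's inequality to cover all $q\in[1,\infty)$ --- is exactly the standard way to establish these claims, and every step checks out.
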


Let $\mu_n$ be the spectral measure of $T_{n, \beta}$, the measure defined by
\[	
	\langle \mu_n, x^k\rangle = T_{n,\beta}^k (1,1), k = 0,1,\dots.
\]
Then the spectral measure $\mu_n$ is supported on the eigenvalues with weights $\{q_j^2 = |v_j(1)|^2\}$, that is,
\[
	\mu_n = \sum_{j = 1}^n |v_j(1)|^2 \delta_{\lambda_j} = \sum_{j = 1}^n q_j^2 \delta_{\lambda_j},
\] 
where $v_1, \dots, v_n$ are the corresponding normalized eigenvectors of $T_{n, \beta}$. 
It is known that the weights $\{q_j^2\}$ are independent of the eigenvalues and have the same distribution with the following random vector
\[
	\left(\frac{\chitilde_{\beta,1}^{2}}{\chitilde_{\beta,1}^{2} + \cdots + \chitilde_{\beta,n}^{2}}, \dots, \frac{\chitilde_{\beta,n}^{2}}{\chitilde_{\beta,1}^{2} + \cdots + \chitilde_{\beta,n}^{2}}\right),
\]
where $\{\chitilde_{\beta, i}^2\}_{i = 1}^n$ is an i.i.d.~sequence whose common distribution is the gamma distribution with parameters  $(\frac \beta2, 1)$ (cf.~\cite[Theorem~2.12]{DE02}). That is to say, the weights $\{q_j^2\}$ are distributed according to the  Dirichlet distribution with parameters $(\beta/2, \dots, \beta/2)$.

When $n \to \infty$ with $n\beta \to \infty$, it follows directly from the asymptotic of the gamma distribution that the matrix $T_{n, \beta}$ converges entrywise to the free Jacobi matrix $J_{free}$
\[
T_{n,\beta}
	\to
	\begin{pmatrix}
	0	&1\\
	1	&0	&1\\
	&\ddots	&\ddots	&\ddots
	\end{pmatrix}	=:J_{free},
\]
namely, for any fixed $i$, as $n \to \infty$ with $n\beta \to \infty$,
\[
	T_{n, \beta}(i,i) \to 0; \quad T_{n,\beta}(i, i+1)\to 1 \text{ in probability and in $L^q$ for any $q \in [1,\infty)$}.
\]
Note that the spectral measure of the free Jacobi matrix, the probability measure $\mu$ satisfying $\bra{\mu, x^k} = J_{free}^k(1,1)$ for all $k = 0,1,\dots$, is nothing but the semicircle distribution. Let $p$ be a polynomial. Then $\bra{\mu_n, p} = p(T_{n,\beta})(1,1)$ is a multivariate polynomial of some top-left entries. Therefore, in this regime, the sequence $\{\bra{\mu_n, p}\}$ converges to $\bra{sc, p}$ in probability and in $L^q$ for any $q \in [1, \infty)$. Consequently, the spectral measure $\mu_n$ converges weakly to the semicircle distribution, in probability. A rigorous proof of the above argument can be found in \cite{Duy2016}.

In the regime where $n\beta \to 2\alpha \in (0, \infty)$, the matrix $T_{n, \beta}$ converges entrywise in distribution to the following infinite Jacobi matrices with independent entries
\[
	J_\alpha = \frac{1}{\sqrt{\alpha}} \begin{pmatrix}
		\Normal(0,1)	&\chitilde_{2\alpha}\\
		\chitilde_{2\alpha}	&\Normal(0,1)	&\chitilde_{2\alpha}\\
		&\ddots	&\ddots	&\ddots
	\end{pmatrix}.
\]
From which, we can deduce that for any polynomial $p$, the sequence $\Ex[\bra{\mu_n, p}]$ converges to a finite limit. The limit turns out to coincide with $\bra{\nu_\alpha, p}$, where $\nu_\alpha$ is the probability measure defined in Theorem~\ref{thm:alpha-regime} (cf.~\cite{DS15}). It is worth mentioning that the probability measure $\nu_\alpha$ is determined by moments.

For Gaussian beta ensembles, since the weights $\{q_j^2\}$ are independent of the eigenvalues, it follows that 
\[
	\Ex[\bra{\mu_n, p}] = \Ex\bigg[\sum_{j = 1}^n q_j^2 p(\lambda_j) \bigg] = \sum_{j = 1}^n \Ex[q_j^2] \Ex[p(\lambda_j)] = \Ex\bigg[\frac{1}{n}\sum_{j = 1}^n p(\lambda_j) \bigg] = \Ex[\bra{L_n, p}].
\]
Here we have used the fact that $\Ex[q_j^2] = 1/n$. Then we arrive at the following result for empirical distributions. 
\begin{lemma}\label{lem:ex}
	For any polynomial $p$, as $n \to \infty$ with $n\beta \to 2\alpha \in (0, \infty]$, 
	\[
		\Ex[\bra{L_n, p}] = \Ex[\bra{\mu_n, p}] \to \bra{\nu_\alpha, p}.
	\]
Here, for convenience, $\nu_\infty$ is used to denote the semicircle distribution. 
\end{lemma}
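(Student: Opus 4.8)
The first equality $\Ex[\bra{L_n,p}]=\Ex[\bra{\mu_n,p}]$ was already obtained above from the independence of the Dirichlet weights $\{q_j^2\}$ together with $\Ex[q_j^2]=1/n$, so the plan is to prove only the convergence $\Ex[\bra{\mu_n,p}]=\Ex[p(T_{n,\beta})(1,1)]\to\bra{\nu_\alpha,p}$. The starting observation is that $p(T_{n,\beta})(1,1)$ depends on only finitely many entries: writing $m=\deg p$ and expanding $p(T_{n,\beta})(1,1)=\sum_{k\le m}c_k\,T_{n,\beta}^k(1,1)$ as a sum over closed walks of length $k$ on the path $\{1,2,\dots,n\}$ that start and end at the vertex $1$, every such walk stays within the first $m+1$ vertices and contributes a product of the diagonal entries $a_i$ and off-diagonal entries $b_i$ visited along its steps. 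Thus $\bra{\mu_n,p}$ is a fixed polynomial, independent of $n$, in the top-left entries $\{a_i,b_i:i\le m\}$.

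I would then take expectations walk by walk. Because the variables $\{\at_i\}$ and $\{\bt_i\}$ are mutually independent, the expectation of each walk's monomial factorizes into a product of one-variable moments $\Ex[a_i^{r}]$ and $\Ex[b_i^{s}]$. Two simplifications apply: in any closed walk on a path each edge is traversed an even number of times, so only even powers of the $b_i$ occur; and since the $a_i$ are centred Gaussian, only terms in which every $a_i$ appears to an even power survive the expectation. Hence $\Ex[\bra{\mu_n,p}]$ is a finite sum of products of the even moments
\[
	\Ex[a_i^{2k}]=\Big(\frac{2}{n\beta}\Big)^{k}(2k-1)!!,\qquad
	\Ex[b_i^{2\ell}]=\Big(\frac{2}{n\beta}\Big)^{\ell}\prod_{s=0}^{\ell-1}\Big(\frac{(n-i)\beta}{2}+s\Big),
\]
where the second formula uses the gamma moment identity of part (ii) of the preceding lemma.

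The next step is to pass to the limit in each of these finitely many moments. For fixed $i$ and $k,\ell\ge 1$, as $n\to\infty$ with $n\beta\to 2\alpha\in(0,\infty]$ one has $(n-i)\beta/2\sim n\beta/2\to\alpha$, so $\Ex[a_i^{2k}]\to\alpha^{-k}(2k-1)!!$ and $\Ex[b_i^{2\ell}]\to\alpha^{-\ell}\prod_{s=0}^{\ell-1}(\alpha+s)$, with the convention that these limits equal $0$ (for $k,\ell\ge1$) when $\alpha=\infty$. These are precisely the corresponding entry moments of the limiting Jacobi matrix, namely $J_{free}$ when $\alpha=\infty$ and $J_\alpha$ when $\alpha<\infty$. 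Since only finitely many walks occur, summing the finitely many convergent products gives $\Ex[\bra{\mu_n,p}]\to p(J)(1,1)$ with $J=J_{free}$ or $J=J_\alpha$. Finally, the spectral moments of $J_{free}$ are those of the semicircle law, while for $\alpha<\infty$ the quantities $p(J_\alpha)(1,1)$ coincide with $\bra{\nu_\alpha,p}$ as recorded above (cf.\ \cite{DS15}); recalling the convention $\nu_\infty=sc$, this yields $\Ex[\bra{\mu_n,p}]\to\bra{\nu_\alpha,p}$ in both regimes.

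The only genuine subtlety, and the point deserving care, is the regime $n\beta\to2\alpha<\infty$: there the entries converge to those of $J_\alpha$ merely in distribution, and distributional convergence does not by itself transmit to convergence of expectations. The device that resolves this is that we never invoke weak convergence of the entries but instead compute the relevant moments exactly through the gamma formula, so the convergence $\Ex[\bra{\mu_n,p}]\to\bra{\nu_\alpha,p}$ follows from the elementary limits of these explicit finite products; equivalently, the polynomial growth of the gamma moments supplies the uniform integrability needed to upgrade the distributional convergence. (In the case $\alpha=\infty$ one may alternatively appeal to the $L^q$ convergence of the top-left entries to those of $J_{free}$ discussed above.) Everything else is routine bookkeeping of finitely many walk contributions.
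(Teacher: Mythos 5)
Your overall strategy is sound and close in spirit to the paper's, but more explicit: the paper handles the regime $n\beta\to\infty$ by noting that the finitely many top-left entries of $T_{n,\beta}$ converge to those of $J_{free}$ in $L^q$ for every $q\in[1,\infty)$, so that $\bra{\mu_n,p}=p(T_{n,\beta})(1,1)$, being a polynomial in those entries, converges in $L^1$ and hence in expectation to $\bra{sc,p}$ (citing \cite{Duy2016} for the details); for $n\beta\to 2\alpha<\infty$ it invokes the entrywise convergence in distribution to $J_\alpha$ together with the identification of the limiting mean spectral measure from \cite{DS15}. Your walk-by-walk expansion with the exact Gaussian and gamma moment formulas replaces the $L^q$/uniform-integrability step by a finite, explicit computation, and in the finite-$\alpha$ regime it genuinely addresses a point the paper passes over quickly (convergence in distribution alone does not give convergence of expectations). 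One notational correction: for finite $\alpha$ the matrix $J_\alpha$ is random, so the limit you obtain is $\Ex[p(J_\alpha)(1,1)]$, not $p(J_\alpha)(1,1)$; it is this expectation that \cite{DS15} identifies with $\bra{\nu_\alpha,p}$.

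There is, however, one concrete error in the limits you state. When $n\beta\to\infty$, the off-diagonal moments do \emph{not} tend to $0$: from your own formula,
\[
\Ex[b_i^{2\ell}]=\Big(\frac{2}{n\beta}\Big)^{\ell}\prod_{s=0}^{\ell-1}\Big(\frac{(n-i)\beta}{2}+s\Big)
=\prod_{s=0}^{\ell-1}\Big(\frac{(n-i)\beta}{n\beta}+\frac{2s}{n\beta}\Big)\longrightarrow 1
\]
for each fixed $i$ and $\ell\ge 1$. Your parenthetical convention that ``these limits equal $0$ (for $k,\ell\ge1$) when $\alpha=\infty$'' is correct only for the diagonal moments $\Ex[a_i^{2k}]$; if the $b$-moments really tended to $0$, the limiting matrix would be the zero matrix and your argument would give $\Ex[\bra{\mu_n,p}]\to p(0)$ rather than $\bra{sc,p}$. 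The limit $1$ is precisely what makes the limiting Jacobi matrix equal to $J_{free}$, with unit off-diagonal, and hence what produces the semicircle law --- so this is the heart of the $\alpha=\infty$ case, not an incidental convention, and it also contradicts your very next sentence asserting that the limits match the entries of $J_{free}$. With that value corrected (or by falling back on the $L^q$ entrywise convergence you mention as an alternative, which is the paper's own route), your proof is complete.
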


The convergence of expectation plays a crucial role in establishing the following strong law of large numbers.

\begin{theorem}\label{thm:LLN}
	As $n \to \infty$ with $n\beta \to 2\alpha \in (0,\infty]$, the empirical distribution of Gaussian beta ensembles converges weakly to $\nu_\alpha$, almost surely.
\end{theorem}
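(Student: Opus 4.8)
The plan is to deduce the almost sure weak convergence from two ingredients: the convergence of the averaged (intensity) measures already contained in Lemma~\ref{lem:ex}, and a concentration inequality for linear statistics that comes from the strong log-concavity of the joint density~\eqref{GE-denstiy}. Since $\nu_\alpha$ is determined by its moments, it suffices to fix a countable convergence-determining family $\{f_k\}$ of bounded $C^1$ functions with bounded derivative, to prove that $\bra{L_n, f_k}\to\bra{\nu_\alpha, f_k}$ almost surely for each $k$, and then to intersect these countably many almost sure events. For a fixed such $f$ I would split the task as
\[
\bra{L_n,f} - \bra{\nu_\alpha, f} = \big(\bra{L_n,f} - \Ex[\bra{L_n,f}]\big) + \big(\Ex[\bra{L_n,f}] - \bra{\nu_\alpha,f}\big),
\]
and treat the mean and the fluctuation separately.

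First I would handle the deterministic term. By Lemma~\ref{lem:ex} the moments $\Ex[\bra{L_n, x^k}]$ converge to $\bra{\nu_\alpha, x^k}$ for every $k$; in particular $\Ex[\bra{L_n,x^2}]$ stays bounded, so the intensity measures $\bar L_n := \Ex[L_n]$ are tight. Along any weakly convergent subsequence, boundedness of a moment of order $k+1$ supplies uniform integrability of $x^k$, whence every subsequential limit has the same moments as $\nu_\alpha$; moment-determinacy of $\nu_\alpha$ then forces $\bar L_n \Rightarrow \nu_\alpha$, and in particular $\Ex[\bra{L_n,f}]\to\bra{\nu_\alpha,f}$ for every bounded continuous $f$.

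For the fluctuation I would invoke the Bakry--\'Emery criterion. Writing the density~\eqref{GE-denstiy} as $e^{-V}$ with $V(\lambda)=\tfrac{n\beta}{4}\sum_i\lambda_i^2-\beta\sum_{i<j}\log|\lambda_i-\lambda_j|$, the interaction term is convex and the quadratic term contributes $\tfrac{n\beta}{2}I$, so $\Hess V \succeq \tfrac{n\beta}{2}I$. Hence $p_{n,\beta}$ satisfies a logarithmic Sobolev (and Poincar\'e) inequality with constant $\tfrac{n\beta}{2}$, which yields sub-Gaussian concentration for Lipschitz functions on $\R^n$. Applying this to $g(\lambda)=\sum_j f(\lambda_j)=n\bra{L_n,f}$, whose Euclidean gradient satisfies $|\nabla g|^2=\sum_j f'(\lambda_j)^2\le n\|f'\|_\infty^2$, gives
\[
\Prob\big(|\bra{L_n,f}-\Ex[\bra{L_n,f}]|>t\big)\le 2\exp\!\Big(-\frac{n(n\beta)\,t^2}{4\|f'\|_\infty^2}\Big).
\]
Because $n\beta\to 2\alpha\in(0,\infty]$ is bounded below by a positive constant for all large $n$, the right-hand side is summable in $n$ for each fixed $t>0$; Borel--Cantelli together with letting $t\downarrow 0$ gives $\bra{L_n,f}-\Ex[\bra{L_n,f}]\to 0$ almost surely. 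Combining the two steps yields $\bra{L_n,f}\to\bra{\nu_\alpha,f}$ almost surely for each $f$ in the family, and the theorem follows.

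The main obstacle, and the reason for working with bounded $C^1$ test functions rather than with polynomials directly, is that the concentration estimate needs a finite $\|f'\|_\infty$: for polynomial test functions one would first have to truncate using a large-deviation bound on the extreme eigenvalues. The conceptual heart of the argument is the convexity constant $\tfrac{n\beta}{2}$ in Bakry--\'Emery, which is exactly what converts the hypothesis $n\beta\to\infty$ or $n\beta\to 2\alpha>0$ into summable tail probabilities. Note that the variance alone decays only like $1/\big(n\,(n\beta)\big)$, which need not be summable when $n\beta\to\infty$ slowly, so the exponential (sub-Gaussian) nature of the concentration, rather than a mere Chebyshev bound, is essential here.
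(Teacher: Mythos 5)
Your proposal is correct in substance but follows a genuinely different route from the paper. The paper stays entirely within the tridiagonal model: it expands $\langle L_n, p\rangle$ for polynomial $p$ into path variables $\xi_w$, splits the centered sum into $m$ sums of independent terms, bounds the \emph{fourth} moment by $C'/n^2$ (estimate~\eqref{4thmoment}), and gets almost sure convergence by summability plus Lemma~\ref{lem:ex} and moment-determinacy of $\nu_\alpha$. You instead use the explicit joint density~\eqref{GE-denstiy}: convergence of the mean measures via the method of moments (from Lemma~\ref{lem:ex}), and sub-Gaussian concentration of Lipschitz linear statistics via Bakry--\'Emery, with the tail bound $2\exp\bigl(-n(n\beta)t^2/(4\|f'\|_\infty^2)\bigr)$, which is indeed consistent with the Poincar\'e inequality~\eqref{GE} that the paper quotes later. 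Each approach has its merits: yours gives exponential (not just polynomial) concentration and handles Lipschitz test functions directly without truncation; the paper's argument uses only moment computations for the independent entries and therefore extends to general tridiagonal models with independent entries (this generality is one of the paper's stated aims, cf.\ Section~3 and the reference to Popescu), whereas your argument is tied to the availability of a log-concave joint eigenvalue density. Your closing remark that second-moment Chebyshev is insufficient is accurate, but note that the paper's fix is the elementary one --- fourth moments, which decay like $1/n^2$ uniformly in the regime $\liminf n\beta>0$ --- so exponential concentration is not strictly \emph{essential}, only essential within a variance-only scheme.

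One point in your argument needs more care: the potential $V(\lambda)=\tfrac{n\beta}{4}\sum_i\lambda_i^2-\beta\sum_{i<j}\log|\lambda_i-\lambda_j|$ is \emph{not} convex on all of $\R^n$ (the function $-\log|x-y|$ fails convexity across the diagonal $\{x=y\}$, where it equals $+\infty$). The standard repair is to work with the ordered eigenvalues, whose law is supported on the convex Weyl chamber $\{\lambda_1<\cdots<\lambda_n\}$; there $\Hess V\succeq\tfrac{n\beta}{2}I$ holds, Bakry--\'Emery applies to log-concave measures on convex domains, and the concentration transfers to your statistic $\sum_j f(\lambda_j)$ because it is a symmetric function. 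With that standard adjustment your proof is complete.
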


Let $\cP_r$ be the set of all closed paths of length $r$ starting at zero, $w = (i_0=0, i_1, \dots, i_r = 0)$, satisfying $|i_{l + 1} - i_l| \le 1, l = 0,1,\dots, r - 1$. For $j \in \N$ and $w \in \cP_r$, let $j + w$ denote the shifted path $(j + i_0, j+i_1, \dots, j + i_r)$, which is a closed path of length $r$ starting at $j$. A path $\hat w$ is called admissible if $\hat w = j + w \in j + \cP_r$ with $j + i_l \in \{1,2, \dots, n\}, l = 0,1,\dots, r$. Then for each $w \in \cP_r$, there are two indices $k_1=k_1(w)$ and $k_2=k_2(w)$ such that $j + w$ is admissible for all $k_1 \le j \le n - k_2$, provided that $n \ge k_1 + k_2$.

For an admissible path $w=(i_0, \dots, i_r)$, we define a random variable 
\[
	\xi_w = \prod_{l = 0}^{r - 1} T_{n, \beta}(i_l, i_{l + 1}) = \prod_{i} a_i^{\alpha_i} b_i^{2\gamma_i},
\]
where
\[
	\alpha_i = \alpha_i(w) = \#\{l : i_{l + 1} = i_l = i\}, \quad \gamma_i = \gamma_i(w) = \# \{j: i_l = i, i_{l + 1} = i + 1\}.
\]
Let $i_{\min} = \min_l{i_l}$ and $i_{\max} = \max_l {i_l}$. Since $w$ is a closed path in which each step can go at most $1$, it follows that $i_{\max} - i_{\min} \le r/2$. Thus given $i_0 = j$, $\xi_w$ depends at most on $\{a_i, b_i\}_{j - r/2, j + r/2}$.

The $r$th moment of the empirical distribution $L_n$ can be written as 
\[
	\bra{L_n, x^r} = \frac{1}{n}\Tr(T_{n, \beta}^r)=\frac{1}{n}\sum_{w \in \cP_r}  \sum_{j = k_1(w)}^{k_2(w)} \xi_{j + w}.
\]
Here $\Tr(A)$ denotes the trace of a matrix $A$.

Let $p = c_0 + c_1 x + \cdots + c_m x^m$ be a polynomial of degree $m>0$. Then
\[
	\langle L_n, p \rangle = \frac{1}{n}\sum_{j = 1}^n p(\lambda_j) =\frac{1}{n}\Tr(p(T_{n, \beta}))= \frac1n \sum_{j = 1}^n p(T_{n, \beta})(j,j).
\]
Set $z_j = p(T_{n, \beta})(j, j) - \Ex[p(T_{n, \beta})(j, j)]$. Observe that $z_i$ and $z_j$ are independent if $|i - j| \ge m$. Thus we separate the following sum into $m$ sums where each sum is a sum of independent random variables 
\[
	n(\langle L_n, p \rangle - \Ex[\langle L_n, p \rangle]) = \sum_{i = 1}^m \left (\sum_{l}z_{i + l m} \right) = \sum_{i = 1}^m Z_i.
\]

Note that
\[
	p(T_{n, \beta})(j,j) = \sum_{r = 0}^m c_r  \sum_{w \in  (j + \cP_r)^*} \xi_w,
\]
where $ (j + \cP_r)^*$ is the set of admissible paths of length $r$ starting at $j$, which is a subset of $j + \cP_r$. In particular, in the expression of $p(T_{n, \beta})(j,j)$, the number of terms is bounded by a constant $M$ which depends only on $m$. We write $M=M(m)$ for short. In addition,  for $n\beta \ge \varepsilon$, there is a constant $L=L(m, \varepsilon)$ such that 
\[
	\sup_{\substack{1\le i \le n \\ r \le 4m}}{\Ex[|a_i|^{r}]} \le L, \quad \sup_{\substack{1\le i \le n-1 \\ r \le 4m}} \Ex[b_i^{r}]  \le L.
\]
Therefore, there is a constant $C = C(p, \varepsilon)$ such that 
\[
	\Ex[z_j^2] \le C,  \quad \Ex[z_j^4] \le C,
\]
provided that $n\beta \ge \varepsilon$. It follows directly that 
\[
	n^2\Ex[(\langle L_n, p \rangle - \Ex[\langle L_n, p \rangle])^2] \le m \sum_{i = 1}^m \Ex[Z_i^2] = m \sum_{i = 1}^m \sum_{l}\Ex[z_{i+ lm}^2] \le mCn.
\]
Consequently, in the regime that $n \to \infty$ with $\liminf_{n\to \infty} n\beta > 0$, 
\begin{equation}\label{bound-for-variance}
	\Var[\langle L_n, p \rangle] \to 0. 
\end{equation}
From which, the convergence in probability of the empirical distribution to a limit follows.

In order to show the almost sure convergence, we will calculate the fourth moment and show that they are summable. We begin with the following estimate 
\[
	n^4\Ex[(\langle L_n, p \rangle - \Ex[\langle L_n, p \rangle])^4] \le m^3 \sum_{i = 1}^m \Ex[Z_i^4] .
\]
Since $Z_i$ is the sum of mean zero independent random variables, its fourth moment can be simplified as  
\[
	\Ex[Z_i^4] = \sum_{i_1, i_2, i_3, i_4} \Ex[z_{i_1}z_{i_2}z_{i_3}z_{i_4}] =6\sum_{i_1 < i_2} \Ex[z_{i_1}^2] \Ex[z_{i_2}^2] + \sum_{i_1}\Ex[z_{i_1}^4],
\]
which is bounded by $(3 C^2 n_i^2 + Cn_i)$, where $n_i$ is the number of terms in $Z_i$. Therefore, there is a constant $C'=C'(p, \epsilon)$ such that for $n\beta \ge \varepsilon$,
\begin{equation}\label{4thmoment}
	\Ex[(\langle L_n, p \rangle - \Ex[\langle L_n, p \rangle])^4] \le \frac{C'}{n^2}.
\end{equation}
The above estimate is enough to prove the following strong law of large numbers.
\begin{lemma}\label{lem:as}
	For any polynomial $p$, as $n \to \infty$ with $\liminf_{n \to \infty} n\beta > 0$,
	\[
		\langle L_n, p \rangle - \Ex[\langle L_n, p \rangle] \to 0 \text{ almost surely.}
	\]
\end{lemma}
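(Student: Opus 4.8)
The plan is to read off the almost sure convergence directly from the fourth-moment estimate \eqref{4thmoment}, which already packages all the probabilistic content of the lemma; what remains is the standard Borel--Cantelli bookkeeping. Since $\liminf_{n\to\infty} n\beta > 0$, there exist $\varepsilon > 0$ and $N$ with $n\beta \ge \varepsilon$ for all $n \ge N$, so the bound
\[
	\Ex[(\bra{L_n, p} - \Ex[\bra{L_n, p}])^4] \le \frac{C'}{n^2}
\]
holds with one fixed constant $C' = C'(p, \varepsilon)$ for all $n \ge N$.

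First I would fix an arbitrary threshold $\delta > 0$ and apply Markov's inequality to the fourth power, obtaining, for $n \ge N$,
\[
	\Prob\big(|\bra{L_n, p} - \Ex[\bra{L_n, p}]| > \delta\big) \le \frac{\Ex[(\bra{L_n, p} - \Ex[\bra{L_n, p}])^4]}{\delta^4} \le \frac{C'}{\delta^4\, n^2}.
\]
Because $\sum_n n^{-2} < \infty$, the right-hand side is summable in $n$, so the Borel--Cantelli lemma yields
\[
	\Prob\big(|\bra{L_n, p} - \Ex[\bra{L_n, p}]| > \delta \text{ infinitely often}\big) = 0,
\]
i.e.\ $\limsup_n |\bra{L_n, p} - \Ex[\bra{L_n, p}]| \le \delta$ almost surely. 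Intersecting the resulting probability-one events over the countable family $\delta = 1/k$, $k \in \N$, gives $\bra{L_n, p} - \Ex[\bra{L_n, p}] \to 0$ almost surely.

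The only point genuinely deserving care is that the statement compares the random variables $\bra{L_n, p}$ across $n$ (with $\beta = \beta_n$ varying), which a priori live on different probability spaces, so to give the almost sure limit meaning one must realize them all on a common space. This is achieved through the tridiagonal representation: starting from a single sequence of independent uniform variables, one generates by inverse transforms the diagonal Gaussians $\at_i$ and the sub-diagonal variables $\bt_i$ for every $n$ at once, so that each $T_{n,\beta_n}$, and hence each $\bra{L_n, p}$, is a function of this shared randomness. I do not anticipate any real obstacle beyond stating this coupling: the decay rate $O(n^{-2})$ in \eqref{4thmoment} is comfortably summable, so the Borel--Cantelli step is robust and requires no sharpening of the moment bound.
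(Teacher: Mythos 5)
Your proof is correct and takes essentially the same route as the paper: both rest entirely on the fourth-moment estimate \eqref{4thmoment} and its summability in $n$, the paper simply summing the fourth powers directly (finite expected sum implies the sum is almost surely finite, hence the terms vanish) rather than invoking Markov plus Borel--Cantelli, which is a cosmetic difference. Your coupling remark, while harmless, is not needed: the argument uses only the marginal law of each $\bra{L_n,p}$, so it is valid on whatever common probability space the ensembles are realized.
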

\begin{proof}
	Let $\beta(n)$ be a sequence of positive real numbers with $\liminf_{n \to \infty} n \beta(n) >0$. Then the estimate~\eqref{4thmoment} implies that 
	\[
		\sum_{n = 1}^\infty \Ex[(\langle L_n, p \rangle - \Ex[\langle L_n, p \rangle])^4] < \infty.
	\]
It follows that 
\[
	\sum_{n = 1}^\infty (\langle L_n, p \rangle - \Ex[\langle L_n, p \rangle])^4 < \infty \text{ almost surely},
\]
and thus, 
\[
	\langle L_n, p \rangle - \Ex[\langle L_n, p \rangle] \to 0 \text{ almost surely.}
\]
The proof is complete.
\end{proof}

\begin{proof}[Proof of Theorem~\rm{\ref{thm:LLN}}]
Lemma~\ref{lem:ex} and Lemma~\ref{lem:as} imply that in the regime where $n\beta \to 2\alpha \in (0, \infty]$,
\[
	\bra{L_n, p} \to \bra{\nu_\alpha, p} \text{ almost surely},
\]
for any polynomial $p$. Since the probability measure $\nu_\alpha$ is determined by moments, the almost sure convergence of the sequence of empirical distributions $\{L_n\}$ follows by a standard argument (see Appendix B in \cite{Duy-Nakano-2016}, for example). The proof is complete.
\end{proof}

The rest of this section is devoted to study the limiting behavior of the sequence of the variances $\Var[\bra{L_n, p}]$. We will need it in establishing the central limit theorem. The following results show a further relation between the empirical distribution and the spectral measure. It is also an easy consequence of the aforementioned properties of the weights $\{q_j^2\}$. 
\begin{lemma}\label{lem:relation-spectral-measure}
For any test function $f$, it holds that
\begin{equation}\label{relation-of-variance}
	n\beta \Var[\bra{L_n, f}]  = (n\beta  + 2) \Ex[\bra{\mu_n, f}^2] - 2 \Ex[\bra{\mu_n, f^2}]  -  n\beta \Ex[\bra{\mu_n, f}]^2.
\end{equation} 
\end{lemma}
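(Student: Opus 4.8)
The plan is to exploit the two key facts about the weights $\{q_j^2\}$ that have already been established: they are independent of the eigenvalues $\{\lambda_j\}$, and they follow the Dirichlet distribution with parameters $(\beta/2, \dots, \beta/2)$. The strategy is to expand $\Ex[\bra{\mu_n, f}^2]$ by writing $\bra{\mu_n, f} = \sum_j q_j^2 f(\lambda_j)$ and then compute the expectation by first conditioning on (or using independence from) the eigenvalues, so that the only quantities requiring the Dirichlet law are the second moments $\Ex[q_i^2 q_j^2]$ for $i = j$ and $i \neq j$.

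First I would record the needed moment formulas for the Dirichlet distribution. Since each $q_j^2$ is marginally $\mathrm{Beta}(\beta/2, (n-1)\beta/2)$, one has $\Ex[q_j^2] = 1/n$, and the standard second-moment formulas give $\Ex[q_j^4] = \frac{\beta/2 + 1}{n(n\beta/2 + 1)}$ together with $\Ex[q_i^2 q_j^2] = \frac{\beta/2}{n(n\beta/2 + 1)}$ for $i \neq j$. These can be derived directly from the ratio-of-gammas representation displayed earlier in the section, or cited as standard Dirichlet moments. The convenient way to package them is to note that both can be written with a common denominator $n(n\beta + 2)$ after clearing the factor of $2$, which is exactly the combination appearing on the right-hand side of \eqref{relation-of-variance}.

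Next I would compute $\Ex[\bra{\mu_n, f}^2]$ by splitting the double sum into diagonal and off-diagonal parts. Using independence of the weights from the eigenvalues,
\[
	\Ex[\bra{\mu_n, f}^2] = \sum_j \Ex[q_j^4]\,\Ex[f(\lambda_j)^2] + \sum_{i \neq j} \Ex[q_i^2 q_j^2]\,\Ex[f(\lambda_i) f(\lambda_j)].
\]
Substituting the moment formulas and rewriting $\sum_{i \neq j} \Ex[f(\lambda_i)f(\lambda_j)] = \Ex[(\sum_j f(\lambda_j))^2] - \Ex[\sum_j f(\lambda_j)^2]$, one expresses everything in terms of $\Ex[\bra{L_n, f}]$, $\Ex[\bra{L_n, f}^2]$ and $\Ex[\bra{L_n, f^2}]$, since $\sum_j f(\lambda_j) = n\bra{L_n, f}$ and $\sum_j f(\lambda_j)^2 = n\bra{L_n, f^2}$. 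The final step is purely algebraic: collect coefficients, use $\Var[\bra{L_n, f}] = \Ex[\bra{L_n, f}^2] - \Ex[\bra{L_n, f}]^2$, and verify that the identity matches the claimed \eqref{relation-of-variance} after multiplying through by the common denominator. I do not expect a genuine obstacle here; the only delicate point is bookkeeping with the factors of $n$ and $\beta$ so that the combinations $(n\beta + 2)$ and $n\beta$ land in the right places, and checking that the $\Ex[q_j^4]$ versus $\Ex[q_i^2q_j^2]$ discrepancy is precisely what produces the $\Ex[\bra{\mu_n, f^2}]$ correction term. Since the statement is an exact identity valid for \emph{any} test function, no approximation or limiting argument is involved, and the proof reduces to this direct computation.
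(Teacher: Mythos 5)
Your proposal is correct and takes essentially the same route as the paper: the paper's proof simply records the Dirichlet moments $\Ex[q_j^2]=\frac{1}{n}$, $\Ex[q_j^4]=\frac{\beta+2}{n(n\beta+2)}$, $\Ex[q_i^2q_j^2]=\frac{\beta}{n(n\beta+2)}$ (identical to yours after clearing the factors of $2$) together with the independence of the weights from the eigenvalues, and then declares the rest a ``direct calculation.'' Your diagonal/off-diagonal expansion of $\Ex[\bra{\mu_n,f}^2]$ and the reduction to $\Var[\bra{L_n,f}]$ is precisely that calculation, carried out explicitly and correctly.
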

\begin{proof}
Since $\{q_j^2\}_{j = 1}^n$ has Dirichlet distribution with parameter $(\beta/2, \dots, \beta/2)$, we have 
\[
	\Ex[q_j^2] = \frac{1}{n}, \Ex[q_j^4] = \frac{\beta + 2}{n(n\beta + 2)}, \Ex[q_i^2 q_j^2] = \frac{\beta}{n(n\beta + 2)}, (1\le i \ne j \le n).
\]
In addition, the weights $\{q_j^2\}$ are independent of the eigenvalues $\{\lambda_j\}$. Then the rest of the proof follows by a direct calculation.
\end{proof}

Note that the $r$th moment of the spectral measure $\mu_n$ depends only on $\{a_i, b_i\}_{1\le i \le (r+1)/2}$. Then by using moments of the gamma distribution, we will deduce the following expressions.
\begin{lemma}\label{lem:single-moment}
\begin{itemize}
\item[\rm(i)] The expectation of the $r$th moment of the spectral measure $\mu_n$ can be  expressed as follows
	\begin{equation}
		\Ex[\bra{\mu_n, x^r}] = \begin{cases}
			0, &\text{if $r= 2q+1$},\\
			\sum_{k = 0}^{r/2} \frac{R_{r; k}(\beta)}{(n\beta)^k}, &\text{if $r= 2q$}.\\
		\end{cases}
	\end{equation}
Here $R_{r; k}(\beta)$ is a polynomial in $\beta$ of degree at most $k$.
\item[\rm (ii)] For the product of two moments, it holds that 
	\begin{equation}
		\Ex[\bra{\mu_n, x^r}\bra{\mu_n, x^s}] =
		\begin{cases}
			0, &\text{if $r+s$ is odd,}\\
			\sum_{k = 0}^{(r+s)/2} \frac{Q_{r,s; k}(\beta)}{(n\beta)^k}, &\text{if $r+s$ is even,}
		\end{cases}
	\end{equation}
where $Q_{r, s; k}(\beta)$ is a polynomial in $\beta$ of degree at most $k$.
\end{itemize}
\end{lemma}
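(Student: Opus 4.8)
The plan is to expand each moment of $\mu_n$ as a sum over lattice paths and then read off the dependence on $n\beta$ and $\beta$ from the gamma moments of the $\bt_i^2$. Recall that $\bra{\mu_n, x^r} = T_{n,\beta}^r(1,1) = \sum_{w \in (1+\cP_r)^*}\xi_w$, where the sum is over admissible closed paths $w=(1=i_0,i_1,\dots,i_r=1)$ of length $r$ starting at the top-left corner. Substituting $a_i = \sqrt{2/(n\beta)}\,\at_i$, $b_i=\sqrt{2/(n\beta)}\,\bt_i$ and using that in a closed path each up-step across the edge $(i,i+1)$ is matched by a down-step, so that $\sum_i\alpha_i + 2\sum_i\gamma_i = r$, I would write
\[
\xi_w = \Big(\frac{2}{n\beta}\Big)^{r/2}\prod_i \at_i^{\alpha_i}\,\bt_i^{2\gamma_i}.
\]
Since $\at_1,\dots,\at_n,\bt_1,\dots,\bt_{n-1}$ are mutually independent, the expectation $\Ex[\xi_w]$ factorises into a product over $i$ of $\Ex[\at_i^{\alpha_i}]$ and $\Ex[\bt_i^{2\gamma_i}]$.

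The odd cases are immediate from parity. If $r$ is odd then $\sum_i\alpha_i = r - 2\sum_i\gamma_i$ is odd, so some $\alpha_i$ is odd; as $\at_i$ is centred Gaussian, $\Ex[\at_i^{\alpha_i}]=0$, whence $\Ex[\xi_w]=0$ for every $w$ and (i) vanishes. The same count applied to $\alpha_i(w)+\alpha_i(w')$, whose total equals $r+s-2(\sum_i\gamma_i(w)+\sum_i\gamma_i(w'))$, gives the vanishing in (ii) when $r+s$ is odd.

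For the even case the key input is the gamma moment formula $\Ex[X^k]=\Gamma(\alpha+k)/\Gamma(\alpha)$. Writing $g=\sum_i\gamma_i\le r/2$, the Gaussian factors contribute a numerical constant and
\[
\Ex[\bt_i^{2\gamma_i}] = \prod_{t=0}^{\gamma_i-1}\Big(\frac{(n-i)\beta}{2}+t\Big).
\]
Setting $u=n\beta/2$ and $\frac{(n-i)\beta}{2}+t = u+(t-\tfrac{i\beta}{2})$, I would expand the product over all $i$ as a polynomial in $u$ of degree $g$, namely $\sum_{j=0}^{g} e_j\,u^{g-j}$, where $e_j$ is the $j$-th elementary symmetric function of the $g$ correction terms $t-\tfrac{i\beta}{2}$. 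Because each correction term has $\beta$-degree at most one, $e_j$ is a polynomial in $\beta$ of degree at most $j$; and since the path is confined to $\{1,\dots,1+r/2\}$ (as $i_{\max}-i_{\min}\le r/2$ and $i_{\min}=1$), the indices $i$ are bounded independently of $n$, so the $e_j$ do not depend on $n$. Substituting $u^{g-j}=2^{-(g-j)}(n\beta)^{g-j}$ and multiplying by the prefactor $(2/(n\beta))^{r/2}$, each path contributes terms of the shape (a $\beta$-polynomial of degree $\le j$)$/(n\beta)^{k}$ with $k=r/2-g+j$. As $0\le j\le g\le r/2$, the exponent $k$ ranges over $\{0,1,\dots,r/2\}$, and since $j\le k$ the matching $\beta$-coefficient has degree $\le k$. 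Summing over the finitely many admissible paths assembles $R_{r;k}(\beta)$ with $\deg_\beta R_{r;k}\le k$, which is (i).

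Part (ii) is the same argument applied to $\xi_w\xi_{w'}$ summed over pairs of admissible closed paths of lengths $r$ and $s$ starting at $1$; the only bookkeeping change is that the exponents become $\alpha_i(w)+\alpha_i(w')$ and $2(\gamma_i(w)+\gamma_i(w'))$, so one replaces $g$ by $G=\sum_i(\gamma_i(w)+\gamma_i(w'))\le (r+s)/2$ and $r$ by $r+s$, and the rising-factorial expansion goes through verbatim to give $\deg_\beta Q_{r,s;k}\le k$. I expect the one genuinely delicate point to be this degree bookkeeping in the even case: one must check that each additional power of $1/(n\beta)$ past the leading order is accompanied by at most one extra power of $\beta$, so that the increments in $k$ and in the $\beta$-degree remain in lockstep. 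Once the path expansion and the gamma moments are in hand, the rest is routine.
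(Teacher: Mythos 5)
Your proposal is correct and follows essentially the same route as the paper: expand the moments as sums over closed paths at the corner (the paper's Motzkin paths), kill the odd cases by Gaussian parity, and in the even cases write the gamma parameter as $\tfrac{n\beta}{2}-\tfrac{i\beta}{2}$ and track degrees in $x=n\beta$ versus $\beta$. The only difference is that you carry out in full (via the elementary-symmetric-function bookkeeping) the degree count that the paper compresses into ``regard moments of $b_i^{2\gamma}$ as polynomials in $x$, the desired results easily follow.''
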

\begin{proof}
A Motzkin path of length $r$ is a path in $\cP_{r}$ which never goes below zero. Let $\cM_r$ be the set of all Motzkin paths of length $r$. For convenience, we shift the indices of the matrix $T_{n,\beta}$ by $1$ for which indices start from $0$. Then it is clear that 
\begin{align*}
	\bra{\mu_n, x^r} &= T_n^r(0,0) = \sum_{w \in \cM_r} \xi_w, \\
	\bra{\mu_n, x^r} \bra{\mu_n, x^s} &= \sum_{w_1 \in \cM_r} \sum_{w_2 \in \cM_s} \xi_{w_1} \xi_{w_2},
\end{align*}
provided that $n \ge \max\{(r + 1)/2, (s+1)/2\}$. However, by setting $b_n = 0$, the above equalities hold and the following arguments work for any $n \ge 1$.
The case where $r$ is odd or $(r+s)$ is odd is trivial because any odd moment of a Gaussian distribution is zero. In the remaining, recall that 
\[
	b_i^2 \sim \frac{2}{n\beta} \Gam \left(\frac{(n-i)\beta}{2},1\right) = \frac{2}{x}\Gam \left(\frac{x}{2} - \frac{i\beta}{2},1\right) \quad (x := n\beta).
\]
Then regard moments of $b_i^{2\gamma}$ as polynomials in $x$, the desired results easily follow.
\end{proof}

%\begin{corollary}\label{cor:expectation}
%	As $n \to \infty$, 
%	\[
%		\Ex[\langle L_n, x^{2q} \rangle] = \Ex[\langle \mu_n, x^{2q} \rangle] \to \begin{cases}
%		R_{2q;0}, &\text{if $n\beta \to \infty$},\\
%		\sum_{k = 0}^q \frac{R_{2q; k}(0)}{(2\alpha)^k}, &\text{if $n\beta \to 2\alpha \in (0, \infty)$}. 
%		\end{cases}
%	\]
%	Any odd moment of $L_n$ has mean zero, that is, $\Ex[\langle L_n, x^{2q+1} \rangle] = \Ex[\langle \mu_n, x^{2q+1}\rangle]=0$.
%\end{corollary}
%
%
%\begin{remark}\label{rem:moments}
%	\begin{itemize}
%		\item[(i)] We conclude that the number $R_{2q; 0}$ is the $2q$th moment of the semicircle distribution because it does not depend on $\beta$. 
%		
%		\item[(ii)] The number $\sum_{k = 0}^q \frac{R_{2q; k}(0)}{(2\alpha)^k}$ coincides with the $2q$th moment of the spectral measure of the following infinite Jacobi matrix (see \cite{Duy-Nakano-2016, DS15})
%		\[
%			\frac{1}{\sqrt{\alpha}} \begin{pmatrix}
%				0 	&\sqrt{\alpha + 1}\\
%				\sqrt{\alpha + 1}	&0 	&\sqrt{\alpha + 2}\\
%				&\ddots	&\ddots	&\ddots
%			\end{pmatrix}.
%		\]
%	\end{itemize}
%\end{remark}

\begin{lemma}
Let $p$ be a polynomial of degree $m$. Then 
	\begin{equation}
		\Var[\bra{L_n, p}] = \sum_{k=2}^{m + 1} \frac{ \beta \ell_{p; k}(\beta)}{(n\beta)^k},
	\end{equation}
where $\ell_{p; k}(\beta)$ is a polynomial in $\beta$ of degree at most $k-2$.
\end{lemma}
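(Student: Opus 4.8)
The plan is to feed the exact expressions of Lemma~\ref{lem:single-moment} into the identity of Lemma~\ref{lem:relation-spectral-measure}. Writing $x = n\beta$ and abbreviating $A = \Ex[\bra{\mu_n,p}^2]$, $B = \Ex[\bra{\mu_n,p^2}]$ and $C = \Ex[\bra{\mu_n,p}]^2$, relation~\eqref{relation-of-variance} becomes $x\Var[\bra{L_n,p}] = x(A - C) + 2(A - B)$. The key device is to reparametrize by the two quantities $u = 1/(n\beta)$ and $\omega = 1/n$. Since $\Ex[a_i^{2h}] = (2u)^h(2h-1)!!$ and, from the proof of Lemma~\ref{lem:single-moment}, $\Ex[b_i^{2\gamma}] = \prod_{l=0}^{\gamma-1}(1 - i\omega + 2lu)$, every entry moment is a polynomial in $u$ and $\omega$, and hence so are $A$, $B$ and $C$. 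I claim that the lemma is equivalent to the assertion that $x\Var[\bra{L_n,p}]$, viewed as a function of $(u,\omega)$, is a polynomial divisible by $\omega$ and of total degree at most $m$: substituting $\omega = \beta u$ (note $\beta/x = 1/n = \omega$) and $u = (n\beta)^{-1}$ regroups such a polynomial precisely into $\sum_{k=2}^{m+1}\beta\,\ell_{p;k}(\beta)\,(n\beta)^{-k}$ with $\deg\ell_{p;k}\le k-2$.

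Thus there are three things to establish: that $A,B,C$ have total $(u,\omega)$-degree at most $m$; that $x(A-C)+2(A-B)$ carries no negative power of $u$; and that it is divisible by $\omega$. For the degree bound I would revisit the closed-path expansion underlying Lemma~\ref{lem:single-moment}. A single path $w$ of length $r$ contributes $\Ex[\xi_w] = (2u)^{\alpha(w)/2}(\cdots)\prod_i\prod_{l}(1 - i\omega + 2lu)$, where $\alpha(w)=\sum_i\alpha_i(w)$ and $\gamma(w)=\sum_i\gamma_i(w)$ count the loop-steps and up-steps; its total degree in $(u,\omega)$ is $\alpha(w)/2 + \gamma(w) = r/2$. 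Multiplying two such contributions, as occurs in $\Ex[\bra{\mu_n,x^r}\bra{\mu_n,x^s}]$, produces total degree $(r+s)/2$, whence $A$, $B$ and $C$ all have total degree at most $m$.

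For the absence of negative powers of $u$, observe that $A - C = \Var[\bra{\mu_n,p}]$. Setting $u=0$ annihilates the diagonal moments ($a_i\to 0$) and reduces the off-diagonal ones to $\Ex[b_i^{2\gamma}] = (1-i\omega)^\gamma$, so that $T_{n,\beta}$ degenerates to the deterministic Jacobi matrix with $a_i = 0$ and $b_i^2 = 1-i\omega$; a deterministic matrix makes $\bra{\mu_n,p}$ constant, so $A-C$ vanishes at $u=0$ and is divisible by $u$. Hence $u^{-1}(A-C)$ has total degree at most $m-1$, and $x\Var[\bra{L_n,p}]$ is a genuine polynomial of total degree at most $m$.

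The crux, and the step I expect to be the main obstacle, is the divisibility by $\omega$, i.e. $x\Var[\bra{L_n,p}]\big|_{\omega=0}\equiv 0$. Fix $u_0 = 1/x_0 > 0$ and let $\omega = 1/n\to 0$ along the regime $n\beta\equiv x_0$; then $\beta = x_0\omega\to 0$ and $n\beta\to x_0\in(0,\infty)$. By continuity of the polynomial, $x\Var(u_0,0) = \lim_{n\to\infty} n\beta\,\Var[\bra{L_n,p}]$ along this sequence. The a priori bound behind~\eqref{bound-for-variance}, namely $\Var[\bra{L_n,p}]\le mC/n$ for $n\beta\ge\varepsilon$, then gives $n\beta\,\Var[\bra{L_n,p}]\le x_0\,mC/n\to 0$, so $x\Var(u_0,0)=0$ for every $u_0>0$; being a polynomial in $u$ vanishing at infinitely many points, $x\Var(u,0)$ is identically zero. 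Collecting the three facts and performing the regrouping described in the first paragraph yields the claimed expression with $\deg\ell_{p;k}\le k-2$. The subtle point is that the factor $\beta$ does not come from an explicit algebraic cancellation but from the fact that the $(n\beta)$-scaled variance is negligible in the regime where $n\beta$ stays bounded.
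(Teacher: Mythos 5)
Your proof is correct, and it rests on the same pillars as the paper's own argument: both feed Lemma~\ref{lem:single-moment} into the identity of Lemma~\ref{lem:relation-spectral-measure}, both use the a priori bound \eqref{bound-for-variance}, and both use the degeneration of $T_{n,\beta}$ as $\beta \to \infty$ with $n$ fixed. The difference is in how the unwanted coefficients are killed. The paper writes $\Var[\bra{L_n,p}] = \sum_{k=0}^{m+1} P_{p;k}(\beta)(n\beta)^{-k}$ with $\deg P_{p;k} \le k$ and runs three separate limiting arguments: the regime $n\beta \to \infty$ kills the constant term $P_{p;0}$; the limit $\beta \to \infty$ with $n$ fixed kills the top coefficient of each $P_{p;k}$; and the regime $n\beta \to 2\alpha \in (0,\infty)$ forces $P_{p;k}(0)=0$. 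Your reparametrization $(u,\omega)=(1/(n\beta),1/n)$ repackages exactly these facts as two divisibility statements for the two-variable polynomial $n\beta\,\Var[\bra{L_n,p}]$: divisibility of $A-C$ by $u$ (which you get in one stroke from the deterministic-matrix degeneration at $u=0$, and which simultaneously yields $P_{p;0}\equiv 0$ and $\deg P_{p;k}\le k-1$, merging the paper's first two steps and supplying the justification the paper only asserts with ``$\Var \to 0$ as $\beta\to\infty$''), and divisibility by $\omega$ (which is the paper's third step verbatim, with $\lim n\beta\,\Var = 0$ in place of $\lim \Var = 0$). Your route buys cleaner bookkeeping---the substitution $\omega=\beta u$ makes it transparent why the prefactor $\beta$ and the degree bound $k-2$ emerge together---while the paper's works coefficient by coefficient with one-variable polynomials only. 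Two points you should tighten if you write this up: the identification of $(A-C)(0,\omega)$ with the variance of $\bra{\mu_n,p}$ for the deterministic Jacobi matrix tacitly uses that $A$ and $C$ are universal polynomials in the entry moments (valid because the path expansion stabilizes once $n$ exceeds roughly $m/2$, the same point the paper handles with its ``$b_n=0$'' convention); and ``multiplying two such contributions'' should be stated as taking $\Ex[\xi_{w_1}\xi_{w_2}]$, which factors over entries by independence rather than over paths---the degree count $(r+s)/2$ is unaffected.
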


\begin{proof}
It follows from Lemma~\ref{lem:single-moment} and the relation \eqref{relation-of-variance}  that $\Var[\bra{L_n, p}]$ has the following form
	\[
		\Var[\bra{L_n, p}] = \sum_{k = 0}^{m + 1} \frac{P_{p; k} (\beta)}{(n\beta)^k},
	\]
with $P_{p; k}(\beta)$ being polynomial in $\beta$ of degree at most $k$. Then  the constant term should be zero because of the estimate \eqref{bound-for-variance}.

Next, observe that for fixed $n$, $\Var[\bra{L_n, p}] \to 0$ as $\beta \to \infty$. Thus for any $n$,
\[
	\sum_{k = 1}^{m + 1} c_{k} n^{-k} = \lim_{\beta \to \infty} \Var[\bra{L_n, p}] = 0,
\]
where $c_{k}$ denotes the coefficient of $\beta^k$ in $P_{p; k}$. Consequently, all coefficients $c_k$ are zero, which implies that the degree of $P_{p;k}$ is at most $(k - 1)$.

Finally, let $n \to \infty$ with $n\beta \to \alpha \in (0, \infty)$, it also follows from the estimate~\eqref{bound-for-variance} that 
\[
	\sum_{k = 1}^{m + 1} P_{p; k}(0) \alpha^{-k} = \lim_{n\beta \to \alpha} \Var[\bra{L_n, p}] = 0,
\]
which then implies that $P_{p; k}(0) = 0$ for all $k$. In particular, $P_{p; 1} \equiv 0$. Combining all the facts, we deduce that $P_{p; k}(\beta) = \beta \ell_{p; k}(\beta)$ with $\ell_{p; k}(\beta)$ being a polynomial of degree at most $(k-2)$.
 The proof is complete.
\end{proof}

\begin{corollary}\label{cor:variance}
Let $p$ be a polynomial of degree $m$. Then as $n \to \infty$,
	\[
		n^2 \beta \Var[\bra{L_n, p}] \to 
		\begin{cases}
			 \ell_{p; 2}=:\sigma_p^2 , &\text{if } n\beta \to \infty, \\
			 \sum_{k = 2}^{m + 1} \frac{\ell_{p; k}(0)}{(2\alpha)^{k - 2}}=:\sigma_{p, \alpha}^2, &\text{if } n\beta \to 2\alpha \in (0, \infty).
		\end{cases}
	\]
\end{corollary}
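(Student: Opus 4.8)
The plan is to read the corollary directly off the preceding lemma, which already pins down the exact dependence of the variance on $n$ and $\beta$. Starting from
\[
	\Var[\bra{L_n, p}] = \sum_{k=2}^{m+1} \frac{\beta\,\ell_{p;k}(\beta)}{(n\beta)^k},
\]
I would multiply by $n^2\beta$ and use $n^2\beta^2 = (n\beta)^2$ to cancel two factors of $n\beta$, arriving at the clean identity
\[
	n^2\beta\,\Var[\bra{L_n, p}] = \sum_{k=2}^{m+1} \frac{\ell_{p;k}(\beta)}{(n\beta)^{k-2}}.
\]
The corollary is then just the evaluation of the limit of this finite sum in the two regimes, term by term.

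In the regime $n\beta \to \infty$, the $k=2$ summand is $\ell_{p;2}(\beta)$, which by the degree bound $\deg\ell_{p;2}\le 0$ is a constant $\ell_{p;2}$, independent of $\beta$; this is the claimed value $\sigma_p^2$. For $k\ge 3$ I would expand $\ell_{p;k}(\beta)=\sum_{j=0}^{k-2}c_{k,j}\beta^j$ and rewrite each summand as
\[
	\frac{\beta^j}{(n\beta)^{k-2}} = \frac{1}{n^{j}(n\beta)^{k-2-j}},
\]
so that every monomial carries a nonnegative power of $n$ and a nonnegative power of $n\beta$ whose exponents sum to $k-2\ge 1$. Since $n\to\infty$ and $n\beta\to\infty$, each such term tends to $0$, and the $k\ge3$ contribution vanishes in the limit.

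In the regime $n\beta\to 2\alpha\in(0,\infty)$, the condition $n\to\infty$ forces $\beta\to 0$, so each polynomial $\ell_{p;k}(\beta)$ converges to its constant term $\ell_{p;k}(0)$ while $(n\beta)^{k-2}\to(2\alpha)^{k-2}$; passing to the limit in the finite sum gives $\sum_{k=2}^{m+1}\ell_{p;k}(0)/(2\alpha)^{k-2}=\sigma_{p,\alpha}^2$. The one point demanding care --- and the crux of the whole argument --- is the vanishing of the $k\ge3$ terms when $n\beta\to\infty$, because there $\beta$ is otherwise unconstrained: it may remain fixed, tend to $0$, or even tend to $\infty$. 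It is exactly the degree bound $\deg\ell_{p;k}\le k-2$ supplied by the lemma that prevents a surviving monomial: the extremal term $\beta^{k-2}/(n\beta)^{k-2}=n^{-(k-2)}$ is controlled by the power of $n$ alone, and no higher-degree term (which could persist when $\beta$ grows comparably to $n$) is allowed. This degree control is the load-bearing ingredient, and everything else is routine.
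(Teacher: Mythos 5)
Your proof is correct and is exactly the argument the paper intends: the corollary is stated without proof precisely because it follows by multiplying the lemma's formula $\Var[\bra{L_n,p}]=\sum_{k=2}^{m+1}\beta\,\ell_{p;k}(\beta)/(n\beta)^k$ by $n^2\beta$ and passing to the limit term by term, with the degree bound $\deg\ell_{p;k}\le k-2$ killing the $k\ge 3$ terms when $n\beta\to\infty$ and $\beta\to 0$ handling the other regime. Your careful justification of the vanishing of the higher-order terms (splitting $\beta^j/(n\beta)^{k-2}$ into nonnegative powers of $n$ and $n\beta$) is precisely the point the paper leaves implicit.
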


%\begin{proof}[Proof of Theorem~\rm{\ref{thm:LLN}}]
%It was shown in Corollary~\ref{cor:expectation} that as $n \to \infty$ with $n\beta \to \infty$,
%\[
%	\Ex[\bra{L_n, x^r}] \to m_r,
%\]
%where $m_r$ is the $r$th moment of the semicircle distribution (see Remark~\ref{rem:moments}). Together with the almost sure convergence in Lemma~\ref{lem:as}, it follows that 
%\[
%	\bra{L_n, x^r} \to m_r \text{ almost surely}.
%\]
%Note that the convergence of all moments is sufficient to ensure the almost sure convergence of the empirical distribution $L_n$ because the semicircle distribution is determined by moments (see \cite[Lemma~2.2]{Duy2016}, for example). The proof is complete.
%\end{proof}

\section{General Jacobi matrices with independent entries}
Consider a sequence of Jacobi matrices
\[
	J_n = \begin{pmatrix}
	a_1^{(n)}		&b_1^{(n)}		\\
	b_1^{(n)}		&a_2^{(n)}		&b_2^{(n)}		\\
	&\ddots	&\ddots	&\ddots	\\
	&&b_{n - 1}^{(n)}	&a_n^{(n)}
	\end{pmatrix}, \quad (a_i^{(n)} \in \R, b_i^{(n)} > 0),
\]
where for each $n$, the entries $\{a_i^{(n)}\}_{i = 1}^n$ and $\{b_i^{(n)}\}_{i = 1}^{n - 1}$ are assumed to be independent. For simplicity, we omit the superscript ${}^{(n)}$ in formulae.

Let $p$ be a polynomial of degree $m$. Based on the martingale difference central limit theorem,
we will give some sufficient conditions under which a central limit theorem holds for the sequence 
\[
	S_n = n \bra{L_n, p} = \sum_{j = 1}^n p(J_n)(j, j).
\]
Define a filtration $\cF_k = \sigma(a_i, b_i : i=1, \dots, k), k= 1,\dots, n$ and $\cF_0 = \{\emptyset, \Omega\}$. Let 
\begin{align*}
	X_k &= \Ex[S_n | \cF_k], 0 \le k \le n, \\
	Y_k &= X_k - X_{k - 1}, 1 \le k \le n,\\
	\sigma_k^2 &= \Ex[Y_k^2 | \cF_{k - 1}], 1 \le k \le n.
\end{align*}
Then a central limit theorem holds under the following conditions (cf.~\cite[Theorem~35.12]{Billingsley-PnM}).
\begin{lemma} \label{lem:MTGCLT}
Assume that there exists a sequence $\{v_n\}$ of positive numbers such that 
\[
	v_n^2\sum_{k = 1}^n \sigma_{k}^2 \to \sigma^2 \text{ in probability as }n \to \infty,
\]
where $\sigma^2$ is a positive constant, and that for each $\varepsilon >0$,
\[
	\sum_{k = 1}^n \Ex[(v_nY_{k})^2 {\bf 1}_{\{|v_nY_{k}| \ge \varepsilon\}}] \to 0 \text{ as }n\to \infty.
\]
Then
\[
	v_n(S_n - \Ex[S_n]) = v_n \sum_{k = 1}^n Y_{k} \overset{d}{\to} \mathcal N(0, \sigma^2) \text{ as }n\to \infty.
\]
\end{lemma}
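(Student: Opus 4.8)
The plan is to recognize $\{X_k\}_{k=0}^n$ as a Doob martingale and then invoke the martingale difference central limit theorem in its triangular-array form. First I would note that $S_n = \sum_{j=1}^n p(J_n)(j,j)$ is a polynomial in the entries $\{a_i\}_{i=1}^n$ and $\{b_i\}_{i=1}^{n-1}$, hence $\cF_n$-measurable; therefore $X_n = \Ex[S_n \mid \cF_n] = S_n$, while $X_0 = \Ex[S_n \mid \cF_0] = \Ex[S_n]$. By the tower property, $\Ex[Y_k \mid \cF_{k-1}] = \Ex[X_k \mid \cF_{k-1}] - X_{k-1} = 0$, so $\{Y_k, \cF_k\}_{k=1}^n$ is a martingale difference sequence, and the telescoping identity
\[
	S_n - \Ex[S_n] = X_n - X_0 = \sum_{k=1}^n Y_k
\]
reduces the claim to a central limit theorem for $v_n \sum_{k=1}^n Y_k$.

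Next I would regard $\{v_n Y_k : 1 \le k \le n\}_{n\ge 1}$ as a triangular martingale difference array and match the two displayed hypotheses to the conditions of the martingale CLT. The conditional second moments of the rescaled array are $\Ex[(v_n Y_k)^2 \mid \cF_{k-1}] = v_n^2 \sigma_k^2$, so the first hypothesis is exactly the requirement that the sum of conditional variances converge in probability to the positive constant $\sigma^2$. The second hypothesis is the Lindeberg condition for the array. With both in hand, Theorem~35.12 of \cite{Billingsley-PnM} gives $v_n \sum_{k=1}^n Y_k \dto \Normal(0, \sigma^2)$, which is the assertion.

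I do not expect a genuine obstacle, since the lemma only repackages an abstract theorem; the substantive work of verifying its hypotheses for concrete ensembles is deferred to the following sections. The one point meriting care is that $\{Y_k\}$ truly forms a triangular array: the differences and the filtration both depend on $n$, so the sequential form of the martingale CLT does not apply verbatim and one must use the array version, which however requires precisely the two stated conditions. A purely formal remark is that some references state the Lindeberg step in conditional form, $\sum_k \Ex[(v_n Y_k)^2 \one_{\{|v_n Y_k| \ge \varepsilon\}} \mid \cF_{k-1}] \Pto 0$; the unconditional version assumed here implies this in $L^1$, because taking expectations of the nonnegative conditional sum recovers the unconditional sum, so either formulation is adequate.
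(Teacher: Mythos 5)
Your proposal is correct and matches the paper's approach: the paper states this lemma without proof, simply as a citation of \cite[Theorem~35.12]{Billingsley-PnM}, and your argument just spells out the routine verification (Doob martingale structure, telescoping, matching the two hypotheses to the triangular-array martingale CLT) that the paper leaves implicit. Your closing remarks on the array form and the unconditional Lindeberg condition are accurate but not needed, since Billingsley's Theorem~35.12 is already stated for arrays with the unconditional Lindeberg condition.
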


Let $\cQ_m(n)$ be the set of all admissible paths of length at most $m$. Then $S_n$ is a linear combination of $\{\xi_w\}_{w \in \cQ_m(n)}$.  For fixed $k$, let
\[
	\xi_w^{(-)} = \prod_{i < k}  a_i^{\alpha_i} b_i^{2\gamma_i}, \quad \xi_w^{(+)} = \prod_{i > k} a_i^{\alpha_i} b_i^{2\gamma_i}.
\]
Then  
\[
	\xi_w = \xi^{(-)}_w a_k^{\alpha_k} b_k^{2\gamma_k} \xi^{(+)}_w .
\]

\begin{lemma}\label{lem:conditional-ex}
\begin{itemize}
\item[\rm (i)] For any path $w$,
	\begin{equation}
		\Delta_k(\xi_w) := \Ex[\xi_w | \cF_k] - \Ex[\xi_w | \cF_{k - 1}] =  \xi_w^{(-)} \Big(a_k^{\alpha_k} b_k^{2\gamma_k} - \Ex[a_k^{\alpha_k} b_k^{2\gamma_k}] \Big) \Ex[\xi_w^{(+)}].	
	\end{equation}
In particular, $\Delta_k(\xi_w)  = 0$, if $\alpha_k = 0$ and $\gamma_k = 0$. 
	
\item[\rm (ii)]	For two paths $w$ and $\hat w$, 
\begin{align*}
	&\Ex[\Delta_k(\xi_w) \Delta_k(\xi_{\hat w}) | \cF_{k - 1}] \\
	&=  \xi_w^{(-)}\xi_{\hat w}^{(-)} \Ex\Big[\Big(a_k^{\alpha_k} b_k^{2\gamma_k} - \Ex[a_k^{\alpha_k} b_k^{2\gamma_k}] \Big)\Big(a_k^{\hat\alpha_k} b_k^{2\hat \gamma_k} - \Ex[a_k^{\hat \alpha_k} b_k^{2\hat \gamma_k}] \Big) \Big] \Ex[\xi_w^{(+)}\xi_{\hat w}^{(+)}].
\end{align*}
Here $\hat \alpha_k$ and $\hat \gamma_k$ denote the corresponding quantities of the path $\hat w$.
\end{itemize}
\end{lemma}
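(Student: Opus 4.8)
The plan is to reduce everything to the multiplicative decomposition $\xi_w = \xi_w^{(-)}\, a_k^{\alpha_k} b_k^{2\gamma_k}\, \xi_w^{(+)}$ together with the fact that the three blocks of entries $\{a_i, b_i : i < k\}$, $\{a_k, b_k\}$ and $\{a_i, b_i : i > k\}$ are mutually independent. Concretely, $\xi_w^{(-)}$ is $\cF_{k-1}$-measurable, the middle factor $a_k^{\alpha_k} b_k^{2\gamma_k}$ is $\sigma(a_k, b_k)$-measurable (hence $\cF_k$-measurable but independent of $\cF_{k-1}$), and $\xi_w^{(+)}$ is independent of $\cF_k$. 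These three measurability/independence facts are the only inputs the proof needs.

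For part (i) I would compute the two conditional expectations separately. Conditioning on $\cF_k$, the factor $\xi_w^{(-)} a_k^{\alpha_k} b_k^{2\gamma_k}$ is $\cF_k$-measurable and pulls out, while $\xi_w^{(+)}$ is independent of $\cF_k$ and contributes its mean, giving $\Ex[\xi_w \mid \cF_k] = \xi_w^{(-)} a_k^{\alpha_k} b_k^{2\gamma_k}\, \Ex[\xi_w^{(+)}]$. Conditioning on $\cF_{k-1}$, only $\xi_w^{(-)}$ pulls out and both remaining factors are independent of $\cF_{k-1}$, giving $\Ex[\xi_w \mid \cF_{k-1}] = \xi_w^{(-)}\, \Ex[a_k^{\alpha_k} b_k^{2\gamma_k}]\, \Ex[\xi_w^{(+)}]$. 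Subtracting yields the stated formula for $\Delta_k(\xi_w)$, and the special case is immediate: $\alpha_k = \gamma_k = 0$ forces $a_k^{\alpha_k} b_k^{2\gamma_k} = 1$, so the centered factor vanishes.

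For part (ii) I would substitute the expression from (i) for each of $\Delta_k(\xi_w)$ and $\Delta_k(\xi_{\hat w})$ and form their product. Each difference carries a $\cF_{k-1}$-measurable prefactor ($\xi_w^{(-)}$ and $\xi_{\hat w}^{(-)}$), a centered factor in $(a_k, b_k)$, and a deterministic tail expectation. Taking $\Ex[\,\cdot \mid \cF_{k-1}]$, the product $\xi_w^{(-)}\xi_{\hat w}^{(-)}$ and the deterministic tail factors pull out, and the conditional expectation of the product of the two centered $k$-th factors collapses to an unconditional expectation, since $(a_k, b_k)$ is independent of $\cF_{k-1}$. Alternatively, bypassing (i), one can expand $\Ex[(U - u)(V - v)\mid \cF_{k-1}] = \Ex[UV \mid \cF_{k-1}] - uv$ with $U = \Ex[\xi_w\mid\cF_k]$, $u = \Ex[\xi_w\mid\cF_{k-1}]$ and the analogous $V, v$ for $\hat w$; the cross terms cancel in pairs and the surviving term is precisely the asserted conditional covariance in the $k$-th entries.

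There is no deep obstacle here: the identity unwinds by the independence of the three index blocks, so the content is careful bookkeeping rather than genuine difficulty. The one point demanding attention is classifying each factor correctly as $\cF_{k-1}$-measurable, $\cF_k$-measurable, or independent of $\cF_k$, and keeping track of the tail factors $\xi_w^{(+)}$ and $\xi_{\hat w}^{(+)}$, which—once the $\cF_k$-averaging of step (i) has been carried out—have already been replaced by deterministic constants and therefore leave the conditional expectation untouched.
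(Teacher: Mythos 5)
Your proof of part (i) is correct and is exactly the argument the paper has in mind: the paper states this lemma with no proof at all, treating it as immediate from the displayed decomposition $\xi_w=\xi_w^{(-)}a_k^{\alpha_k}b_k^{2\gamma_k}\xi_w^{(+)}$ and the mutual independence of the three blocks $\{a_i,b_i\}_{i<k}$, $\{a_k,b_k\}$, $\{a_i,b_i\}_{i>k}$. Your bookkeeping of which factor is $\cF_{k-1}$-measurable, which is $\cF_k$-measurable, and which is independent of $\cF_k$ is precisely the intended content.

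Part (ii) is where you should have stopped and looked harder. Carried to the end, your substitution of (i) gives
\begin{align*}
&\Ex[\Delta_k(\xi_w)\Delta_k(\xi_{\hat w})\mid \cF_{k-1}]\\
&\qquad= \xi_w^{(-)}\xi_{\hat w}^{(-)}\,
\Ex\Big[\Big(a_k^{\alpha_k}b_k^{2\gamma_k}-\Ex[a_k^{\alpha_k}b_k^{2\gamma_k}]\Big)
\Big(a_k^{\hat\alpha_k}b_k^{2\hat\gamma_k}-\Ex[a_k^{\hat\alpha_k}b_k^{2\hat\gamma_k}]\Big)\Big]\,
\Ex[\xi_w^{(+)}]\,\Ex[\xi_{\hat w}^{(+)}],
\end{align*}
that is, with the \emph{product of expectations} $\Ex[\xi_w^{(+)}]\Ex[\xi_{\hat w}^{(+)}]$, whereas the printed statement has the \emph{expectation of the product} $\Ex[\xi_w^{(+)}\xi_{\hat w}^{(+)}]$. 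These are genuinely different: the two tails both involve the entries $\{a_i,b_i\}_{i>k}$ and so are not independent of each other. Concretely, take $w=\hat w=(k,k+1,k+1,k)$, so that $\xi_w=b_k^2a_{k+1}$, $\xi_w^{(-)}=1$, $\xi_w^{(+)}=a_{k+1}$. By (i), $\Delta_k(\xi_w)=(b_k^2-\Ex[b_k^2])\Ex[a_{k+1}]$, so the left-hand side equals $\Var[b_k^2]\,(\Ex[a_{k+1}])^2$ (which is zero for Gaussian beta ensembles, where $\Ex[a_{k+1}]=0$), while the printed right-hand side equals $\Var[b_k^2]\,\Ex[a_{k+1}^2]>0$. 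So the formula in the paper is inconsistent with part (i), of which (ii) is a pure substitution consequence; the version your computation produces is the correct one, and the printed one is a typo. Your write-up, however, asserts that the surviving term is ``precisely the asserted conditional covariance'' --- a blind proof should have flagged the mismatch rather than claimed agreement, since as written you certify a false identity. The slip is harmless downstream (either version is a deterministic constant, and all the paper later uses is that $\sigma_k^2$ is an $\cF_{k-1}$-measurable polynomial in the entries with indices near $k$, together with $\Ex[\sigma_k^2]=\Ex[Y_k^2]$), but it should be recorded as a correction, not passed over.
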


Recall that for an admissible path  $w=(i_0 , i_1, \dots, i_r )$ of length $r$, $i_{\max} - i_{\min} \le r/2$, where $i_{\min} = \min_l{i_l}$ and $i_{\max} = \max_l {i_l}$. In addition, Lemma~\ref{lem:conditional-ex}(i) implies that $i_{\min} \le k  \le i_{\max}$, if $\Delta_k(\xi_w) \ne 0$. Since $Y_k$ is a linear combination of $\Delta_k(\xi_w)$ over admissible paths of length at most $m$, it follows that $Y_k$, and hence $\sigma_k^2$ depends only on $\{a_i, b_i\}_{k-m/2 \le i \le k+ m/2}$. Note that  
$Y_{k + x}$ is just a shifted version of $Y_k$ by shifting the indices of $a$'s and $b$'s, if $m \le k < k + x \le n - m$. In particular, when $\{a_i\}$ and $\{b_i\}$ are i.i.d.~sequences, then $\{Y_k\}$ and $\{\sigma_k^2\}$ are stationary for ${m \le k \le n - m }$. In this case, the two conditions in Lemma~\ref{lem:MTGCLT} hold trivially with $v_n = n^{-1/2}$ under the assumption that all moments of $a$'s and $b$'s are finite. In non i.i.d.~case, by observing that $\sigma_k^2$ and $\sigma_{k'}^2$ are independent when $|k - k'| > m$, we give the following useful criterions.
\begin{lemma}\label{lem:general-CLT}
	Assume that there exists a sequence of positive integers $\{v_n\}$ such that
	\begin{itemize}
		\item[\rm(i)]
			$
				v_n^2 \sum_{k = 1}^n \Ex[Y_k^2] \to \sigma_p^2 \text{ as } n \to \infty,
			$
			where $\sigma_p^2 \ge 0$ is a constant; and that
		\item[\rm(ii)]
			$
				v_n^4\Ex[Y_k^4] \le \frac{C}{n^{1+\delta}} ,
			$
		for all $k = 1, \dots, n$, where $C > 0$ and $\delta > 0$ are constants not depending on $n$.
Then 
	\[
		v_n (S_n - \Ex[S_n])= v_n \sum_{k = 1}^n Y_{k}  \dto \Normal(0, \sigma_p^2).
	\]
		
	\end{itemize}
\end{lemma}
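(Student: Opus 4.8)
The plan is to verify the two hypotheses of the martingale difference central limit theorem, Lemma~\ref{lem:MTGCLT}, for the martingale difference sequence $\{Y_k\}$ with the given scaling $\{v_n\}$ and limiting variance $\sigma^2 = \sigma_p^2$. Since $\Ex[\sigma_k^2] = \Ex[\Ex[Y_k^2 \mid \cF_{k-1}]] = \Ex[Y_k^2]$, the tower property shows that hypothesis (i) is exactly the statement $\Ex[v_n^2 \sum_{k=1}^n \sigma_k^2] \to \sigma_p^2$. Thus the convergence in probability $v_n^2\sum_{k=1}^n \sigma_k^2 \to \sigma_p^2$ demanded by Lemma~\ref{lem:MTGCLT} will follow as soon as I control the variance of $v_n^2\sum_{k=1}^n \sigma_k^2$.

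The Lindeberg-type condition is the easier one and follows at once from hypothesis (ii). On the event $\{|v_nY_k| \ge \varepsilon\}$ one has $(v_nY_k)^2 \le \varepsilon^{-2}(v_nY_k)^4$, so
\[
	\sum_{k = 1}^n \Ex[(v_nY_{k})^2 \one_{\{|v_nY_{k}| \ge \varepsilon\}}] \le \frac{1}{\varepsilon^2}\sum_{k = 1}^n v_n^4\Ex[Y_k^4] \le \frac{1}{\varepsilon^2}\cdot n \cdot \frac{C}{n^{1 + \delta}} = \frac{C}{\varepsilon^2 n^\delta} \to 0.
\]

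For the remaining condition I would exploit the finite-range dependence already recorded above the statement: $\sigma_k^2$ is a function of $\{a_i, b_i\}_{k - m/2 \le i \le k + m/2}$ only, so $\sigma_k^2$ and $\sigma_{k'}^2$ are independent whenever $|k - k'| > m$. Hence in the expansion $\Var[\sum_{k=1}^n \sigma_k^2] = \sum_{k, k'} \Cov(\sigma_k^2, \sigma_{k'}^2)$ only the $O(mn)$ terms with $|k - k'| \le m$ survive. Each surviving covariance is controlled by Cauchy--Schwarz, $|\Cov(\sigma_k^2, \sigma_{k'}^2)| \le \tfrac12(\Var(\sigma_k^2) + \Var(\sigma_{k'}^2))$, while conditional Jensen gives $\Var(\sigma_k^2) \le \Ex[\sigma_k^4] = \Ex[(\Ex[Y_k^2 \mid \cF_{k-1}])^2] \le \Ex[Y_k^4]$. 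Combining these with hypothesis (ii),
\[
	\Var\Big[v_n^2 \sum_{k = 1}^n \sigma_k^2\Big] \le (2m + 1)\, n\, v_n^4 \max_{1 \le k \le n} \Ex[Y_k^4] \le \frac{(2m + 1)C}{n^\delta} \to 0,
\]
so that $v_n^2 \sum_{k=1}^n \sigma_k^2 \to \sigma_p^2$ in $L^2$, and in particular in probability.

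With both hypotheses of Lemma~\ref{lem:MTGCLT} in place, the conclusion $v_n(S_n - \Ex[S_n]) \dto \Normal(0, \sigma_p^2)$ follows directly when $\sigma_p^2 > 0$; the degenerate case $\sigma_p^2 = 0$ is handled separately by noting that $\Var[v_n S_n] = v_n^2\sum_{k=1}^n \Ex[Y_k^2] \to 0$ forces $v_n(S_n - \Ex[S_n]) \to 0$ in $L^2$, hence in distribution to $\Normal(0,0)$. The only genuine subtlety — and the crux of the whole argument — is the $m$-dependence of $\{\sigma_k^2\}$, which collapses the double sum of covariances to a single sum and lets the fourth-moment decay of hypothesis (ii) close the estimate; the Cauchy--Schwarz and conditional Jensen steps are routine.
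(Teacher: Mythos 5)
Your proof is correct and follows essentially the same route as the paper: you verify both hypotheses of Lemma~\ref{lem:MTGCLT} by using the tower property $\Ex[\sigma_k^2]=\Ex[Y_k^2]$, the $m$-dependence of $\{\sigma_k^2\}$ together with conditional Jensen and hypothesis (ii) to make $\Var\bigl[v_n^2\sum_k \sigma_k^2\bigr]$ vanish, and the fourth-moment bound for the Lindeberg condition, exactly as the paper does (your covariance bound via $\tfrac12(\Var(\sigma_k^2)+\Var(\sigma_{k'}^2))$ versus the paper's $(\Ex[\sigma_k^4]\Ex[\sigma_{k'}^4])^{1/2}$ is an immaterial difference). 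Your explicit handling of the degenerate case $\sigma_p^2=0$, which Lemma~\ref{lem:MTGCLT} as stated (for $\sigma^2>0$) does not literally cover, is a small point of care that the paper glosses over.
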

\begin{proof}
By conditional Jensen's inequality and the assumption (ii), it follows that
\[
	v_n^4|\Cov(\sigma_k^2, \sigma_{k'}^2)| \le v_n^4(\Ex[\sigma_k^4] \Ex[\sigma_{k'}^4])^{1/2} \le v_n^4(\Ex[Y_k^4] \Ex[Y_{k'}^4])^{1/2} \le \frac{C}{n^{1+\delta}}.
\]
In addition recall that $\sigma_k^2$ and $\sigma_{k'}^2$ are independent, if $|k - k'| > m$. Thus 
\[
	0\le \Var\bigg[v_n^2\sum_{k = 1}^n \sigma_k^2 \bigg] = v_n^4\sum_{k, k'} \Cov(\sigma_k^2, \sigma_{k'}^2) \le \frac{(m+1) C}{n^\delta} \to 0 \text{ as } n \to \infty.
\]
On the other hand, since $\Ex[\sigma_k^2] = \Ex[Y_k^2]$,  the assumption (i) implies that 
\[
	v_n^2 \sum_{k = 1}^n \Ex[\sigma_k^2] \to \sigma_p^2 \text{ as }n \to \infty.
\]
Therefore 
\[
	v_n^2 \sum_{k = 1}^n \sigma_k^2 \to \sigma_p^2 \text{ in probability as } n \to \infty,
\]
which shows the condition (i) in Lemma~\ref{lem:MTGCLT}. The condition (ii) in Lemma~\ref{lem:MTGCLT}   also holds because
\[
	\sum_{k = 1}^n \Ex[(v_nY_k)^2 \one_{|v_n Y_k| \ge \varepsilon}] \le \frac{1}{\varepsilon^2} \sum_{k = 1}^n \Ex[(v_nY_k)^4]  \le \frac{C}{n^\delta \varepsilon^2} \to 0.
\]
Thus, the desired central limit theorem follows. The proof is complete.
\end{proof}

Here is the main result in this section.
\begin{theorem}\label{thm:general-CLT}
Let $p$ be a polynomial of degree $m$. Let 
\[
	S_n = \Tr(p(J_n)) = \sum_{j = 1}^n p(J_n)(j,j).
\]
	Assume that there exists a sequence of positive integers $\{v_n\}$ such that
	\begin{itemize}
		\item[\rm(i)]
			$
				v_n^2 \Var[S_n] \to \sigma_p^2 \text{ as } n \to \infty,
			$
			where $\sigma_p^2\ge 0$ is a constant; and that
		\item[\rm(ii)]
			$
				v_n^4\Ex[\Delta_k(\xi_w)^4] \le \frac{C}{n^{1+\delta}} ,
			$
		for all admissible paths $w \in \cQ_{m}(n)$, for all $k = 1, \dots, n$, where $C > 0$ and $\delta > 0$ are constants not depending on $n$.
Then 
	\[
		v_n (S_n - \Ex[S_n]) \dto \Normal(0, \sigma_p^2).
	\]
	\end{itemize}
\end{theorem}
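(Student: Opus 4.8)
The plan is to deduce Theorem~\ref{thm:general-CLT} directly from Lemma~\ref{lem:general-CLT} by verifying its two hypotheses. The essential observation is that the quantities $Y_k$ appearing in Lemma~\ref{lem:general-CLT} are built out of the martingale increments $\Delta_k(\xi_w)$ of the individual path contributions. Since $S_n$ is a linear combination of $\{\xi_w\}_{w \in \cQ_m(n)}$ with coefficients coming from the polynomial $p$, and since the conditional expectation is linear, each increment $Y_k = X_k - X_{k-1} = \Ex[S_n \mid \cF_k] - \Ex[S_n \mid \cF_{k-1}]$ is precisely the same linear combination of $\{\Delta_k(\xi_w)\}_{w \in \cQ_m(n)}$. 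This is what links the two lemmas.

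First I would check condition (i). By the martingale orthogonality of the increments $\{Y_k\}$ (they are successive differences of the Doob martingale $X_k = \Ex[S_n \mid \cF_k]$, so $\Ex[Y_jY_k]=0$ for $j\neq k$), we have the Pythagorean identity $\Var[S_n] = \sum_{k=1}^n \Ex[Y_k^2]$. Hence the assumption $v_n^2\Var[S_n]\to\sigma_p^2$ in Theorem~\ref{thm:general-CLT}(i) is literally identical to condition (i) of Lemma~\ref{lem:general-CLT}. This step is immediate once the orthogonality is noted.

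The slightly more substantial step is condition (ii). I must pass from the per-path bound $v_n^4\Ex[\Delta_k(\xi_w)^4]\le C/n^{1+\delta}$ to the aggregate bound $v_n^4\Ex[Y_k^4]\le C'/n^{1+\delta}$. Since $Y_k = \sum_{w} c_w\,\Delta_k(\xi_w)$, where the sum runs over admissible paths of length at most $m$ passing through level $k$ and the $c_w$ are bounded by the coefficients of $p$, I would use the fact (already established in the discussion preceding Lemma~\ref{lem:conditional-ex}) that the number of such paths is bounded by a constant $M=M(m)$ depending only on $m$. Then by the elementary inequality $(\sum_{i=1}^{M} x_i)^4 \le M^3 \sum_{i=1}^{M} x_i^4$ (Cauchy--Schwarz applied twice, or power-mean), I obtain
\[
	\Ex[Y_k^4] \le M^3 \sum_{w} |c_w|^4 \,\Ex[\Delta_k(\xi_w)^4] \le M^3 \cdot M \cdot (\max_w |c_w|^4) \cdot \frac{C}{v_n^4\, n^{1+\delta}},
\]
so that $v_n^4 \Ex[Y_k^4] \le C'/n^{1+\delta}$ with $C' = M^4 (\max_w|c_w|)^4 C$, a constant independent of $n$ and $k$. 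This verifies condition (ii) of Lemma~\ref{lem:general-CLT}.

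With both conditions of Lemma~\ref{lem:general-CLT} established, the conclusion $v_n(S_n - \Ex[S_n])\dto\Normal(0,\sigma_p^2)$ follows immediately. \textbf{The main obstacle} is not analytical but combinatorial bookkeeping: one must argue carefully that only finitely many (namely $O(M)$) admissible paths contribute a nonzero $\Delta_k(\xi_w)$ at a given level $k$ — this is guaranteed by Lemma~\ref{lem:conditional-ex}(i), which forces $i_{\min}\le k\le i_{\max}$ together with the geometric constraint $i_{\max}-i_{\min}\le r/2\le m/2$ — and that the polynomial coefficients $c_w$ are uniformly bounded. Once this finiteness is in hand, moving from the single-path moment bound to the increment moment bound is routine, and the whole theorem reduces to an application of the preceding lemma.
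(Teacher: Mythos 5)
Your proposal is correct and takes essentially the same approach as the paper: both deduce the theorem from Lemma~\ref{lem:general-CLT} by using martingale orthogonality to identify $\Var[S_n]=\sum_{k=1}^n\Ex[Y_k^2]$ (so that assumption (i) rephrases the lemma's condition (i)), and by writing $Y_k=\sum_w c_w\Delta_k(\xi_w)$ as a sum of at most $M(m)$ terms with bounded coefficients to transfer the per-path fourth-moment bound to $\Ex[Y_k^4]$. The only difference is that you spell out the power-mean inequality and the path-counting details that the paper's proof leaves as ``immediately follows.''
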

\begin{proof}
	It is a direct consequence of Lemma~\ref{lem:general-CLT}. Indeed, the assumption (i) just rephrases the condition (i) in that lemma. For given polynomial $p$ of degree $m$, note that  
	\[
		Y_k = \sum_{w} c_w \Delta_k(\xi_w),
	\]
with at most $M$ terms, and $|c_w| \le L$, where $M$ and $L$ are constants depending only on $p$. Then the condition (ii) in Lemma~\ref{lem:general-CLT} immediately follows from  the assumption (ii). The proof is complete.  
\end{proof}

\section{Gaussian beta ensembles}
\subsection{A central limit theorem for polynomial test functions}
By using a general theory developed in the previous section, we first derive a central limit theorem for linear statistics of polynomial test functions. Since the condition (i) in Theorem~\ref{thm:general-CLT} has been established in Corollary~\ref{cor:variance}, we only need to show the condition (ii). For this purpose, we will need the following property of the gamma distribution.
\begin{lemma}\label{lem:gamma-distribution}
	Let $X$ be distributed as gamma distribution with parameters $(\alpha, 1)$. Then for any positive integer $r$, the fourth central moment of $X^r$, 
$
		\Ex[ (X^r - \Ex[X^r])^4] ,
$
is a polynomial in $\alpha$ of degree at most $(4r - 2)$.
\end{lemma}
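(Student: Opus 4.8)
The plan is to reduce everything to the moment formula $\Ex[X^k] = \prod_{i=0}^{k-1}(\alpha+i)$ recorded earlier for the gamma distribution, which exhibits each raw moment as a \emph{monic} polynomial in $\alpha$ of degree exactly $k$ whose coefficient of $\alpha^{k-1}$ is $\binom{k}{2}$. Writing $\mu = \Ex[X^r]$ and expanding the fourth central moment by the binomial theorem, I would record the identity
\[
	\Ex[(X^r - \mu)^4] = \sum_{j=0}^4 (-1)^j\binom{4}{j}\,\mu^{\,j}\,\Ex\big[X^{(4-j)r}\big],
\]
using $\Ex[X^r]=\mu$ to read off the factor $\Ex[X^{(4-j)r}]$ in each term. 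Every summand is a polynomial in $\alpha$ of degree $jr+(4-j)r = 4r$, so the expression is a priori of degree $4r$; the whole point is that the two top coefficients cancel, and once the coefficients of $\alpha^{4r}$ and $\alpha^{4r-1}$ are shown to vanish, the claimed bound $4r-2$ follows.

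The coefficient of $\alpha^{4r}$ is the easy part: each $\mu^{\,j}$ and each $\Ex[X^{(4-j)r}]$ is monic, so every product is monic and the coefficient equals $\sum_{j=0}^4(-1)^j\binom{4}{j}=(1-1)^4=0$. For the coefficient of $\alpha^{4r-1}$ I would use that the subleading coefficient of a product of monic polynomials is the sum of the subleading coefficients: $\mu^{\,j}$ contributes $j\binom{r}{2}$ and $\Ex[X^{(4-j)r}]$ contributes $\binom{(4-j)r}{2}$. This gives the coefficient of $\alpha^{4r-1}$ as
\[
	\binom{r}{2}\sum_{j=0}^4(-1)^j\binom{4}{j}\,j \;+\; \sum_{j=0}^4(-1)^j\binom{4}{j}\binom{(4-j)r}{2}.
\]
The first sum is $0$ because $\sum_{j}(-1)^j\binom{4}{j}j=0$, and after the substitution $s=4-j$ the second becomes $\sum_{s=0}^4(-1)^s\binom{4}{s}g(s)$ with $g(s)=\binom{sr}{2}=\tfrac12(r^2s^2-rs)$, i.e. a fourth finite difference of a polynomial of degree $2$ in $s$, which vanishes.

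The main obstacle, such as it is, lies in this last step: the cancellation of the $\alpha^{4r}$ term is forced by the plain binomial identity, but the vanishing of the $\alpha^{4r-1}$ term is less transparent, and the cleanest way I see to handle it is to recognize the relevant alternating sum as an order-four finite difference acting on the quadratic $s\mapsto\binom{sr}{2}$. I would present these two cancellations as the heart of the argument and suppress the routine bookkeeping of the remaining lower-order coefficients, since only the upper bound $4r-2$ on the degree is required and no exact expression is needed.
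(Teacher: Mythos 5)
Your proof is correct and follows exactly the route the paper intends: the paper's own proof simply asserts that the fourth central moment is a polynomial of degree at most $4r$ and that the coefficients of $\alpha^{4r}$ and $\alpha^{4r-1}$ vanish as ``an easy exercise,'' and your binomial expansion together with the monicity/subleading-coefficient bookkeeping and the finite-difference identity is precisely that exercise carried out in full. Nothing is missing; your write-up in fact supplies the details the paper omits.
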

\begin{proof}
	It is clear that $\Ex[ (X^r - \Ex[X^r])^4] $ is a polynomial in $\alpha$ of degree at most $4r$. It will be an easy exercise to check that the coefficients of $\alpha^{4r}$ and $\alpha^{4r - 1}$ vanish. 
\end{proof}

\begin{lemma}
There is a constant $C = C(m, \varepsilon)$ such that  
\[
	\Ex[\Delta_k(\xi_w)^4] \le \frac{C}{(n\beta)^2}.
\]
for all paths $w$ of length at most $m$ and $n\beta \ge \varepsilon$.
\end{lemma}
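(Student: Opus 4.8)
The plan is to turn the claim into a bookkeeping problem for powers of $n\beta$, driven by the explicit formula for $\Delta_k(\xi_w)$ in Lemma~\ref{lem:conditional-ex}(i) and the degree reduction in Lemma~\ref{lem:gamma-distribution}. First I would substitute the scalings $a_i = \tfrac{\sqrt2}{\sqrt{n\beta}}\at_i$ and $b_i = \tfrac{\sqrt2}{\sqrt{n\beta}}\bt_i$ into $\xi_w = \prod_i a_i^{\alpha_i} b_i^{2\gamma_i}$ and pull out the common prefactor. Since $w$ is a closed path of length $r$ whose horizontal steps and up-steps satisfy $\sum_i \alpha_i + 2\sum_i \gamma_i = r$, this gives $\xi_w = (2/n\beta)^{r/2}\prod_i \at_i^{\alpha_i}\bt_i^{2\gamma_i}$, and the very same prefactor factors out of every term of $\Delta_k(\xi_w)$. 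Because $\Ex[\xi_w^{(+)}]$ is deterministic and the blocks $\{i<k\}$, $\{i=k\}$ are independent, raising to the fourth power and taking expectation should yield
\[
	\Ex[\Delta_k(\xi_w)^4] = \Big(\tfrac{2}{n\beta}\Big)^{2r} \Big(\prod_{i<k}\Ex[\at_i^{4\alpha_i}]\,\Ex[\bt_i^{8\gamma_i}]\Big)\, \Ex\big[(\at_k^{\alpha_k}\bt_k^{2\gamma_k} - \Ex[\at_k^{\alpha_k}\bt_k^{2\gamma_k}])^4\big]\, \Big(\prod_{i>k}\Ex[\at_i^{\alpha_i}]\,\Ex[\bt_i^{2\gamma_i}]\Big)^{4}.
\]

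Next I would bound each factor's growth in $n\beta$. The Gaussian moments $\Ex[\at_i^{4\alpha_i}]$ and $\Ex[\at_i^{\alpha_i}]$ are absolute constants. For the $\bt$-factors I use $\bt_i^2 \sim \Gam((n-i)\beta/2,1)$ and the moment formula $\Ex[X^\ell] = \tfrac{\Gamma(\alpha+\ell)}{\Gamma(\alpha)}$: since $(n-i)\beta/2 \le n\beta/2$ and, for $n\beta\ge\varepsilon$, a polynomial of degree $\ell$ with nonnegative coefficients evaluated at $n\beta/2$ is $\le C_\varepsilon (n\beta)^\ell$, the outer blocks contribute at most $C(n\beta)^{4\sum_{i<k}\gamma_i}$ and $C(n\beta)^{4\sum_{i>k}\gamma_i}$ respectively. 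The central factor is where Lemma~\ref{lem:gamma-distribution} enters. If $\alpha_k=0$ and $\gamma_k\ge 1$ it is exactly $\Ex[(\bt_k^{2\gamma_k}-\Ex[\bt_k^{2\gamma_k}])^4]$, a polynomial in $(n-k)\beta/2$ of degree at most $4\gamma_k-2$, hence $\le C(n\beta)^{4\gamma_k-2}$; if $\alpha_k\ge 1$ I merely use $\Ex[(Z-\Ex Z)^4]\le 16\,\Ex[Z^4]=16\,\Ex[\at_k^{4\alpha_k}]\Ex[\bt_k^{8\gamma_k}]\le C(n\beta)^{4\gamma_k}$; and if $\alpha_k=\gamma_k=0$ the difference vanishes.

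Finally I would combine the estimates and invoke the cancellation $2r = 2\sum_i\alpha_i + 4\sum_i\gamma_i$. In the case $\alpha_k\ge 1$ the numerator is at most $C(n\beta)^{4\sum_i\gamma_i}$, so the total power is $4\sum_i\gamma_i - 2r = -2\sum_i\alpha_i \le -2$; in the case $\alpha_k=0$, $\gamma_k\ge 1$ the degree reduction supplies two extra negative powers, giving total power $4\sum_i\gamma_i - 2 - 2r = -2\sum_i\alpha_i - 2 \le -2$. Either way $\Ex[\Delta_k(\xi_w)^4]\le C/(n\beta)^2$, with $C=C(m,\varepsilon)$ after taking the maximum over the finitely many path shapes of length at most $m$. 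The main obstacle is precisely the central factor: one must see that the degree reduction of Lemma~\ref{lem:gamma-distribution} is indispensable exactly when there is no horizontal step at site $k$ (so that no spare power of $n\beta$ is available from the diagonal), while in the presence of a horizontal step the crude bound already suffices — reconciling these two regimes so that both land at the sharp exponent $-2$ is the heart of the argument.
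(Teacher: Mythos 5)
Your proof is correct and takes essentially the same approach as the paper: factor $\Ex[\Delta_k(\xi_w)^4]$ via Lemma~\ref{lem:conditional-ex}, bound the gamma moments by power counting in $n\beta$, and invoke the degree-reduction Lemma~\ref{lem:gamma-distribution} precisely when no $a$-factor is available to supply the negative powers. The only cosmetic difference is organizational: you split cases on the local exponents $(\alpha_k,\gamma_k)$ at site $k$, whereas the paper splits on the global quantity $\sum_i \alpha_i$ (crude bound $(n\beta)^{-2\sum\alpha}$ when $\sum\alpha>0$, degree reduction when $\sum\alpha=0$); both arguments rest on the same two estimates and land at the same exponent.
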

\begin{proof}
	Given $m\in \N$ and $\varepsilon > 0$, there exists a constant $M = M(m, \varepsilon)
$ such that for all $i=1,\dots, n$, and all $r \le 4m$,
\[
	\Ex[|a_i|^r] \le M, \quad \Ex[|\at_i|^r] \le M, \quad \Ex[|b_i|^r] \le M.
\]

For a path $w$ of length at most $m$, Lemma~\ref{lem:conditional-ex} implies that 
\[
	\Ex[\Delta_k(\xi_w)^4] = \Ex[\xi_w^{(-)}{}^4] \Ex[(a_k^{\alpha_k} b_k^{2\gamma_k} - \Ex[a_k^{\alpha_k} b_k^{2\gamma_k}] )^4] \Ex[\xi_w^{(+)}]^4.
\]
Note that for $r \le 4m$,
\[
	\Ex[|a_i|^r] = \Ex[|\at_i|^r](\frac{2}{n\beta})^{r/2} \le \frac{2^{r/2} M}{(n\beta)^{ r/2}}.
\]
Then using the inequality $\Ex[(X+Y)^4] \le 8(\Ex[X^4] + \Ex[Y^4])$, we estimate $\Ex[\Delta_k(\xi_w)^4] $ roughly as follows 
\[
	\Ex[\Delta_k(\xi_w)^4]  \le \frac{M'}{(n\beta)^{2\sum\alpha}},
\]
for some constant $M' = M'(m, \varepsilon)$. Thus, the case where $\sum \alpha > 0$ becomes trivial. When $\sum\alpha = 0$, using Lemma~\ref{lem:gamma-distribution}, we also arrive at the desired result.
\end{proof}

\begin{theorem}
Let $p$ be a polynomial. Then as $n\to \infty$,
	\[
		n\sqrt{\beta} (\bra{L_n, p} - \Ex[\bra{L_n, p} ]) \dto \begin{cases}
		 \Normal(0, \sigma_p^2), &\text{if } n\beta \to \infty,\\
		  \Normal(0, \sigma_{p,\alpha}^2), &\text{if } n\beta \to 2\alpha \in (0, \infty),
		\end{cases}
	\]
where $\sigma_p^2 \ge 0$ and $\sigma_{p, \alpha}^2 \ge 0$ are the constants in Corollary~\rm\ref{cor:variance}.
\end{theorem}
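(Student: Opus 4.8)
The plan is to verify the two hypotheses of Theorem~\ref{thm:general-CLT} for the Gaussian beta ensemble Jacobi matrix $J_n = T_{n,\beta}$, with the scaling sequence $v_n = n\sqrt{\beta}$, i.e.\ $v_n^2 = n^2\beta$. The theorem then delivers the stated convergence in distribution immediately, so the entire task reduces to checking conditions (i) and (ii) in each of the two regimes.

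Condition (i) requires $v_n^2 \Var[S_n] = n^2\beta\,\Var[\bra{L_n, p}]\cdot n^0 \to \sigma_p^2$; but $S_n = n\bra{L_n,p}$, so $v_n^2\Var[S_n] = n^2\beta\,\Var[n\bra{L_n,p}]$ needs a moment of care with the $n$-factor. Writing $v_n = n\sqrt{\beta}$ and $S_n = n\bra{L_n,p}$, one has $v_n(S_n - \Ex[S_n]) = n\sqrt\beta\,(n\bra{L_n,p} - n\Ex[\bra{L_n,p}])$, so the correct normalizing sequence to plug into Theorem~\ref{thm:general-CLT} is $v_n = \sqrt\beta$ relative to $S_n$, giving $v_n^2\Var[S_n] = \beta\,\Var[n\bra{L_n,p}] = n^2\beta\,\Var[\bra{L_n,p}]$. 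This is exactly the quantity computed in Corollary~\ref{cor:variance}, which states that $n^2\beta\,\Var[\bra{L_n,p}]$ converges to $\sigma_p^2$ when $n\beta\to\infty$ and to $\sigma_{p,\alpha}^2$ when $n\beta\to 2\alpha$. Thus condition (i) holds in both regimes with no further work, as the author already notes.

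Condition (ii) asks that $v_n^4\,\Ex[\Delta_k(\xi_w)^4] \le C n^{-1-\delta}$ for all admissible paths $w \in \cQ_m(n)$ and all $k$. With $v_n = \sqrt\beta$ relative to $S_n$, this reads $\beta^2\,\Ex[\Delta_k(\xi_w)^4] \le C n^{-1-\delta}$. The preceding lemma supplies the key input: $\Ex[\Delta_k(\xi_w)^4] \le C(n\beta)^{-2}$ uniformly over such paths, provided $n\beta \ge \varepsilon$. Substituting, $\beta^2 \cdot C(n\beta)^{-2} = C n^{-2}$, which is $\le C n^{-1-\delta}$ with $\delta = 1$. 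So condition (ii) holds with $\delta = 1$ and a constant depending only on $p$ and $\varepsilon$; the hypothesis $\liminf n\beta > 0$ (guaranteed in both regimes, since $n\beta\to\infty$ or $n\beta\to 2\alpha>0$) ensures the lemma's requirement $n\beta\ge\varepsilon$ is eventually met.

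Having verified both conditions, I would apply Theorem~\ref{thm:general-CLT} separately in each regime, reading off $\sigma_p^2$ from Corollary~\ref{cor:variance} when $n\beta\to\infty$ and $\sigma_{p,\alpha}^2$ when $n\beta\to 2\alpha$, to conclude that $n\sqrt\beta(\bra{L_n,p} - \Ex[\bra{L_n,p}]) \dto \Normal(0,\sigma_p^2)$ or $\Normal(0,\sigma_{p,\alpha}^2)$ respectively. I anticipate no genuine obstacle here: the heavy lifting (the variance asymptotics of Corollary~\ref{cor:variance} and the fourth-moment bound on $\Delta_k(\xi_w)$) has already been carried out in the earlier lemmas, and the proof amounts to correctly bookkeeping the normalization $v_n$ against $S_n = n\bra{L_n,p}$. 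The only point demanding care is this bookkeeping of powers of $n$ and $\beta$ when translating between the normalization of $\bra{L_n,p}$ and that of $S_n$; once that is pinned down, the result is a clean corollary of the general theory.
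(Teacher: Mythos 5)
Your proposal is correct and is essentially identical to the paper's own proof: the paper also obtains this theorem by applying Theorem~\ref{thm:general-CLT} with the normalization $v_n=\sqrt{\beta}$ relative to $S_n = n\bra{L_n,p}$, taking condition (i) from Corollary~\ref{cor:variance} and condition (ii) from the preceding lemma's bound $\Ex[\Delta_k(\xi_w)^4]\le C(n\beta)^{-2}$, which yields $\beta^2\Ex[\Delta_k(\xi_w)^4]\le Cn^{-2}$ (i.e.\ $\delta=1$). Your careful bookkeeping of the $n$ and $\beta$ powers is exactly the right (and only) point of care, and it is resolved correctly.
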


By the explicit joint density of Gaussian beta ensembles, one can deduce the following Poincar\'e inequality \cite{Duy2016}
\begin{equation}\label{GE}
	n^2 \beta \Var[\langle L_n, f\rangle] \le 2 \Ex[ \langle L_n, (f')^2\rangle],
\end{equation}
for any differential function $f$ with continuous derivative.
The above inequality, together with the central limit theorem for polynomial test functions, implies a central limit theorem for differentiable functions whose derivative is continuous of polynomial growth (see Theorem~4.9 in \cite{Duy-Nakano-2016}). Moreover, in the regime that $n\beta \to \infty$, since the limit variance does not depend on $\beta$, its explicit formula was known \cite[Theorem~3.5]{Su-book-2015}. To summarize, we get the following result.
\begin{theorem}
	Let $f$ be a differentiable function whose derivative is continuous of polynomial growth. Then as $n\to \infty$ with $n\beta \to \infty$, 
%	\[
%		\sqrt{\beta} \Big(f(\lambda_1) + \cdots + f(\lambda_n) - \Ex[f(\lambda_1) + \cdots + f(\lambda_n) ] \Big) \dto \Normal (0, \sigma_f^2),
%	\]
	\[
		\sqrt{\beta}\Big( \sum_{j = 1}^n f(\lambda_j) - \Ex \Big[ \sum_{j = 1}^n (f(\lambda_j)\Big]  \Big)\to \Normal(0, \sigma_f^2),
	\]
where 
\[
		\sigma_f^2 = \frac{1}{2\pi^2} \int_{-2}^2 \int_{-2}^2 \left( \frac{f(x) -f(y)}{x - y}\right)^2 \frac{4 - xy}{\sqrt{4 - x^2}\sqrt{4 - y^2}} dx dy. 
	\]
\end{theorem}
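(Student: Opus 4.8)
The plan is to transfer the central limit theorem from polynomials to the class of differentiable test functions by a standard approximation argument, combining the central limit theorem for polynomial test functions with the Poincar\'e inequality \eqref{GE} to control the approximation error. First I would fix a polynomial $p$ and write the centered linear statistics as
\[
	n\sqrt\beta(\bra{L_n, f} - \Ex[\bra{L_n, f}]) = n\sqrt\beta(\bra{L_n, p} - \Ex[\bra{L_n, p}]) + R_n,
\]
where $R_n = n\sqrt\beta(\bra{L_n, f - p} - \Ex[\bra{L_n, f - p}])$ is a centered error term. By the central limit theorem for polynomial test functions (the preceding theorem, in the regime $n\beta \to \infty$), the first summand converges in distribution to $\Normal(0, \sigma_p^2)$, where the variance $\sigma_p^2$ equals the double integral \eqref{variance-f} with $f$ replaced by $p$; this explicit identification of $\sigma_p^2$ with the double integral is the content of \cite[Theorem~3.5]{Su-book-2015}.

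Next I would estimate $R_n$. Its variance is exactly $n^2\beta\Var[\bra{L_n, f-p}]$, so the Poincar\'e inequality \eqref{GE} gives
\[
	\Var[R_n] \le 2\Ex[\bra{L_n, (f' - p')^2}].
\]
The function $(f'-p')^2$ is continuous of polynomial growth, and by Theorem~\ref{thm:LLN} together with the convergence of all polynomial moments of $L_n$ (which supplies uniform integrability against polynomial-growth functions), one has
\[
	\Ex[\bra{L_n, (f'-p')^2}] \to \bra{sc, (f'-p')^2} = \int_{-2}^2 (f'(x) - p'(x))^2 \frac{\sqrt{4 - x^2}}{2\pi}\, dx.
\]
By the Weierstrass approximation theorem I may choose $p$ so that $\sup_{x \in [-2,2]}|f'(x) - p'(x)|$ is arbitrarily small, making this right-hand side as small as desired; hence $\limsup_n \Var[R_n]$ can be made arbitrarily small. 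At the same time, since the divided difference $\frac{p(x)-p(y)}{x-y}$ converges to $\frac{f(x)-f(y)}{x-y}$ uniformly on $[-2,2]^2$ (both are controlled by the corresponding supremum of the derivative via the mean value theorem) and the kernel $\frac{4-xy}{\sqrt{4-x^2}\sqrt{4-y^2}}$ is integrable, dominated convergence yields $\sigma_p^2 \to \sigma_f^2$.

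Finally I would combine these facts. Taking a sequence of polynomials $p_\ell$ with $\sup_{[-2,2]}|f'-p_\ell'| \to 0$, the decomposition gives characteristic-function estimates: using $\Ex[|R_n^{(\ell)}|] \le \sqrt{\Var[R_n^{(\ell)}]}$ for the centered error, the distance $|\Ex[e^{it W_n}] - e^{-t^2\sigma_f^2/2}|$ (with $W_n$ the full centered statistic) is bounded by the three terms coming from $R_n^{(\ell)}$, from the polynomial central limit theorem, and from $\sigma_{p_\ell}^2 \to \sigma_f^2$; letting $n\to\infty$ and then $\ell \to\infty$ forces $W_n \dto \Normal(0, \sigma_f^2)$. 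The main obstacle—and the only place where the polynomial-growth hypothesis on $f'$ is genuinely needed—is the tail control justifying $\Ex[\bra{L_n, (f'-p')^2}] \to \bra{sc, (f'-p')^2}$: one splits the integral at a large level $K$, treats $[-K,K]$ by the semicircle law and treats the tail $\{|x|>K\}$ by dominating $(f'-p')^2$ by a fixed polynomial and invoking the uniform boundedness of the higher moments of $L_n$, so that the tail contribution is small uniformly in $n$.
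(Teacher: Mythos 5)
Your proposal is correct and follows essentially the same route as the paper: the paper likewise combines the central limit theorem for polynomial test functions with the Poincar\'e inequality \eqref{GE} in a standard approximation argument (delegated there to Theorem~4.9 of \cite{Duy-Nakano-2016}), and identifies the limiting variance via \cite[Theorem~3.5]{Su-book-2015} exactly as you do. Your write-up simply fills in the details the paper outsources to those references—the polynomial-plus-remainder decomposition, the uniform-integrability tail control for the polynomial-growth hypothesis, and the characteristic-function bookkeeping—and these details are sound.
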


\section{Concluding remark}
%A martingale approach can be used to establish a central limit theorem for a general tridiagonal model in \cite{Popescu-09}. The approach in this paper can also be extended to study finite diagonal random matrices.
% For an application of the martingale central limit theorem in studying random band matrices with the number of band increasing to infinity, see \cite{Shcherbina-2015}.% \cite{Bose-Sen-2013}.

We have established a central limit theorem for the linear statistics of differentiable functions. For non-differentiable functions, we only have a few results. Let us mention here two examples. 

\begin{itemize}
\item[(i)] (log-determinant) 
The same argument as in \cite{Tao-Vu-2012} yields a central limit theorem for log-determinants of Gaussian beta ensembles, that is, for fixed $\beta > 0$, as $n \to \infty$,  
\[
	\frac{\log |\det(G\beta E_n)| -\frac12 \log n! + \frac14 \log n}{\sqrt{\frac1\beta\log n}}\dto \Normal(0,1).
\]
In case of GOE and GUE $(\beta = 1,2)$, it is also known that the absolute value of the determinant is equal in distribution to a product of independent random variables, from which the above central limit theorem can be established in a different way. See \cite{Duy-RIMS} for more details.
	
\item[(ii)](eigenvalues counting) Let $f$ be the indicator of the interval $[a,b] \subset (-2,2)$. Then its linear statistics counts the number of eigenvalues falling in this interval. For the case of GUE and GOE, we have 
	\[
		\frac{\#\{\lambda_j \in [a,b]\} - \Ex[\#\{\lambda_j \in [a,b]\}]}{\sqrt{\frac{\log n}{\beta\pi^2}}} \to \Normal(0,1),
	\]
where $\beta = 1,2$ corresponds to GOE and GUE, respectively. The proof for the GUE case is based on estimating the variance and a general central limit theorem for counting points in some region of determinantal point processes. The case GOE is deduced from GUE by interlacing formulae \cite{Dallaporta-Vu-2011, Gustavsson-2005}. However, the same problem for general $\beta$ is still open. To the best of the author's knowledge, only a result for counting of positive eigenvalues was known \cite[Theorem~3.12]{Su-book-2015}. 
\end{itemize}

\bigskip
\noindent{\bf Acknowledgements. }
The author would like to thank the referee for many useful comments. This work is supported by JSPS KAKENHI Grant Numbers JP16K17616.

\begin{footnotesize}

\end{footnotesize}

%
%\begin{footnotesize}
%\bibliographystyle{spmpsci}
%\bibliography{bib-no-url}

\begin{thebibliography}{10}

\bibitem{Allez12}
Allez, R., Bouchaud, J.P., Guionnet, A.: Invariant beta ensembles and the
  {G}auss-{W}igner crossover.
\newblock Phys. Rev. Lett. \textbf{109}, 094,102 (2012)

\bibitem{Peche15}
Benaych-Georges, F., P{\'e}ch{\'e}, S.: Poisson statistics for matrix ensembles
  at large temperature.
\newblock J. Stat. Phys. \textbf{161}(3), 633--656 (2015)

\bibitem{Billingsley-PnM}
Billingsley, P.: Probability and measure, third edn.
\newblock Wiley Series in Probability and Mathematical Statistics. John Wiley
  \& Sons, Inc., New York (1995).
\newblock A Wiley-Interscience Publication

\bibitem{Dallaporta-Vu-2011}
Dallaporta, S., Vu, V.: A note on the central limit theorem for the eigen-value
  counting function of {W}igner matrices.
\newblock Electron. Commun. Probab. \textbf{16}, 314--322 (2011)

\bibitem{DE02}
Dumitriu, I., Edelman, A.: Matrix models for beta ensembles.
\newblock J. Math. Phys. \textbf{43}(11), 5830--5847 (2002)

\bibitem{DE06}
Dumitriu, I., Edelman, A.: Global spectrum fluctuations for the
  {$\beta$}-{H}ermite and {$\beta$}-{L}aguerre ensembles via matrix models.
\newblock J. Math. Phys. \textbf{47}(6), 063,302, 36 (2006)

\bibitem{Duy-RIMS}
Duy, T.K.: Distributions of the determinants of {G}aussian beta ensembles.
\newblock RIMS K\^oky\^uroku \textbf{No.2023}, 77--85 (2017)

\bibitem{Duy2016}
Duy, T.K.: On spectral measures of random {J}acobi matrices.
\newblock Osaka J. Math.  (2017).
\newblock (to appear). Available at arXiv:1601.01146

\bibitem{Duy-Nakano-2016}
Duy, T.K., Nakano, F.: Gaussian beta ensembles at high temperature: eigenvalue
  fluctuations and bulk statistics.
\newblock arXiv:1611.09476

\bibitem{DS15}
Duy, T.K., Shirai, T.: The mean spectral measures of random {J}acobi matrices
  related to {G}aussian beta ensembles.
\newblock Electron. Commun. Probab. \textbf{20}, no. 68, 13 (2015)

\bibitem{Gustavsson-2005}
Gustavsson, J.: Gaussian fluctuations of eigenvalues in the {GUE}.
\newblock Ann. Inst. H. Poincar\'e Probab. Statist. \textbf{41}(2), 151--178
  (2005)

\bibitem{Johansson98}
Johansson, K.: On fluctuations of eigenvalues of random {H}ermitian matrices.
\newblock Duke Math. J. \textbf{91}(1), 151--204 (1998)

\bibitem{Popescu-09}
Popescu, I.: General tridiagonal random matrix models, limiting distributions
  and fluctuations.
\newblock Probab. Theory Related Fields \textbf{144}(1-2), 179--220 (2009)

\bibitem{Ramirez-Rider-Virag-2011}
Ram\'\i~rez, J.A., Rider, B., Vir\'ag, B.: Beta ensembles, stochastic {A}iry
  spectrum, and a diffusion.
\newblock J. Amer. Math. Soc. \textbf{24}(4), 919--944 (2011)

\bibitem{Shcherbina-2015}
Shcherbina, M.: On fluctuations of eigenvalues of random band matrices.
\newblock J. Stat. Phys. \textbf{161}(1), 73--90 (2015)

\bibitem{Su-book-2015}
Su, Z.: Random matrices and random partitions, \emph{World Scientific Series on
  Probability Theory and Its Applications}, vol.~1.
\newblock World Scientific Publishing Co. Pte. Ltd., Hackensack, NJ (2015)

\bibitem{Tao-Vu-2012}
Tao, T., Vu, V.: A central limit theorem for the determinant of a {W}igner
  matrix.
\newblock Adv. Math. \textbf{231}(1), 74--101 (2012)

\bibitem{Valko-Virag-2009}
Valk\'o, B., Vir\'ag, B.: Continuum limits of random matrices and the
  {B}rownian carousel.
\newblock Invent. Math. \textbf{177}(3), 463--508 (2009)

\end{thebibliography}
%\end{footnotesize}

\end{document}